\def\bA{{\bf A}}
\def\bB{{\bf B}}
\def\bG{{\bf G}}
\def\bH{{\bf H}}
\def\bM{{\bf M}}
\def\bN{{\bf N}}
\def\bS{{\bf S}}
\def\bT{{\bf T}}
\def\bZ{{\bf Z}}
\def\aq{/  \kern-.25em / }
\def\g{\mathfrak{g}}
\def\cH{\mathcal{ H}}
\def\Z{\mathbb{ Z}}
\def\boxit#1{\vbox{\hrule\hbox{\vrule\kern3pt
          \vbox{\kern3pt#1\kern3pt}\kern3pt\vrule}\hrule}}
\begin{document}

\newtheorem{theorem}{Theorem}[subsection]
\newtheorem{lemma}[theorem]{Lemma}
\newtheorem{proposition}[theorem]{Proposition}
\newtheorem{problem}[theorem]{Problem}
\newtheorem{corollary}[theorem]{Corollary}

\theoremstyle{definition}
\newtheorem{definition}[theorem]{Definition}
\newtheorem{example}[theorem]{Example}
\newtheorem{xca}[theorem]{Exercise}

\theoremstyle{remark}
\newtheorem{remark}[theorem]{Remark}

\def\goth{\frak}

\def\GL{{\rm GL}}
\def\tr{{\rm tr}\, }
\def\A{{\Bbb A}}
\def\bs{\backslash}
\def\Q{{\Bbb Q}}
\def\R{{\Bbb R}}
\def\Z{{\Bbb Z}}
\def\C{{\Bbb C}}
\def\SL{{\rm SL}}
\def\cS{{\cal S}}
\def\cH{{\cal H}}
\def\G{{\Bbb G}}
\def\F{{\Bbb F}}
\def\cF{{\cal F}}

\def\cB{{\cal B}}
\def\cA{{\cal A}}
\def\cE{{\cal E}}

\newcommand{\oB}{{\overline{B}}}
\newcommand{\oN}{{\overline{N}}}

\def\CC{{\Bbb C}}
\def\ZZ{{\Bbb Z}}
\def\QQ{{\Bbb Q}}
\def\cS{{\cal S}}

\def\Ad{{\rm Ad}}

\def\bG{{\bf G}}
\def\bH{{\bf H}}
\def\bT{{\bf T}}
\def\bM{{\bf M}}
\def\bB{{\bf B}}
\def\bN{{\bf N}}
\def\bS{{\bf S}}
\def\bZ{{\bf Z}}
\def\t{\kern.1em {}^t\kern-.1em}
\def\cc#1{C_c^\infty(#1)}
\def\Fx{F^\times}
\def\half{\hbox{${1\over 2}$}}
\def\T{{\Bbb T}}
\def\Ox{{\frak O}^\times}
\def\Ex{E^\times}
\def\Ecl{{\cal E}}

\def\g{{\frak g}}
\def\h{{\frak h}}
\def\k{{\frak k}}
\def\ft{{\frak t}}
\def\n{{\frak n}}
\def\b{{\frak b}}

\def\2by2#1#2#3#4{\hbox{$\bigl( 
{#1\atop #3}{#2\atop #4}\bigr)$}}
\def\wh{\Xi}
\def\C{{\Bbb C}}
\def\bs{\backslash}
\def\adots{\mathinner{\mkern2mu
\raise1pt\hbox{.}\mkern2mu
\raise4pt\hbox{.}\mkern2mu
\raise7pt\hbox{.}\mkern1mu}}

\def\tim{\bf}
\def\timit{\it}
\def\timsm{\scriptstyle}
\def\secttt#1#2{\vskip.2in\noindent
{$\underline{\hbox{#1}}$\break (#2)}\vskip.2in}
\def\sectt#1{\vskip.2in\noindent
{$\underline{\hbox{#1}}$}\vskip.2in}
\def\sectm#1{\vskip.2in\noindent
{$\underline{#1}$}\vskip.2in}
\def\mat#1{\left[\matrix{#1}\right]}
\def\cc#1{C_c^\infty(#1)}
\def\ds{\displaystyle}
\def\Hom{\mathop{Hom}\nolimits}
\def\Ind{\mathop{Ind}\nolimits}
\def\bs{\backslash}
\def\ni{\noindent}
\def\eb{{\bf e}}
\def\fb{{\bf f}}
\def\hb{{\bf h}}
\def\rg{{\goth R}}
\def\ig{{\goth I}}
\def\ccl{{\cal C}}
\def\dcl{{\cal D}}
\def\ecl{{\cal E}}
\def\hcl{{\cal H}}
\def\ocl{{\cal O}}
\def\ncl{{\cal N}}
\def\ag{{\goth a}}
\def\bg{{\goth b}}
\def\cg{{\goth c}}
\def\og{{\goth O}}
\def\hg{{\goth h}}
\def\lg{{\goth l}}
\def\mg{{\goth m}}
\def\Og{{\goth O}}
\def\rg{{\goth r}}
\def\sg{{\goth s}}
\def\Sg{{\goth S}}
\def\tg{{\goth t}}
\def\zg{{\goth z}}
\def\C{{\Bbb C}}
\def\Q{{\Bbb Q}}
\def\R{{\Bbb R}}
\def\T{{\Bbb T}}
\def\Z{{\Bbb Z}}
\def\t{{}^t\kern-.1em}
\def\tr{\hbox{tr}}
\def\ad{{\rm ad}}
\def\Ad{{\rm Ad}}
\def\2by2#1#2#3#4{\hbox{$\bigl( {#1\atop #3}{#2\atop #4}\bigr)$}}

\def\Kcl{{\cal K}}
\def\Pcl{{\cal P}}
\def\Scl{{\cal S}}
\def\Vcl{{\cal V}}
\def\Wcl{{\cal W}}
\def\Ecl{{\cal E}}
\def\A{{\Bbb A}}
\def\F{{\Bbb F}}
\def\T{{\Bbb T}}
\def\go{{\goth O}}
\def\Ox{{\goth O}^\times}
\def\Fx{F^\times}
\def\Ex{E^\times}
\def\half{\hbox{${1\over 2}$}}
\def\vtwo{\vskip .2in}
\def\BibFH{{\bf [1]}}
\def\BibGod{{\bf [2]}}
\def\BibCrelle{{\bf [3]}}
\def\BibAmJ{{\bf [4]}}
\def\BibDuke{{\bf [5]}}
\def\BibJL{{\bf [6]}}
\def\BibRR{{\bf [7]}}

\def\BibFH{{\bf [1]}}
\def\BibGod{{\bf [2]}}
\def\BibCrelle{{\bf [3]}}
\def\BibAmJ{{\bf [4]}}
\def\BibDuke{{\bf [5]}}
\def\BibJL{{\bf [6]}}
\def\BibRR{{\bf [7]}}

\newcommand{\N}{\mathbb{N}}

\newcommand{\gen}{{\operatorname{gen}}}
\newcommand{\ind}{\operatorname{ind}}
\newcommand{\Wh}{\mathcal{W}}
\newcommand{\Kr}{\mathcal{K}}
\newcommand{\V}{\mathcal{V}}
\newcommand{\U}{\mathcal{U}}
\renewcommand{\O}{\mathcal{O}}
\newcommand{\lift}{\goth{g}}
\newcommand{\inv}{\iota}
\newcommand{\supp}{{\operatorname{Supp}}}
\def\cW{{\cal W}}

\title{The Construction of Regular\\  Supercuspidal Representations}
\author{Jeffrey Hakim}
\date{\today}
\maketitle
\abstract{This paper simplifies and further develops various aspects of Tasho Kaletha's construction of regular supercuspidal representations.  Moreover, Kaletha's construction is connected with the author's revision of Yu's construction of tame supercuspidal representations.  This allows for a more direct construction of regular supercuspidal representations that is more amenable to applications.}
\tableofcontents

\parskip=.13in

\section{Introduction}

The theory of regular supercuspidal representations was introduced by Tasho Ka\-le\-tha in \cite{KalYu}.  Given a suitable  character $\mu$ of an elliptic maximal torus in a $p$-adic connected, reductive group, Kaletha constructs a regular supercuspidal representation $\pi (\mu)$ by first associating to $\mu$ a technical object $\Psi_\mu$ known as a generic, cuspidal $G$-datum (see \cite[Definition 3.11]{MR2431732}) and then applying Jiu-Kang Yu's construction \cite{MR1824988} of tame supercuspidal representations.

The purpose of this paper is to construct Kaletha's correspondence in a simpler and more direct way using the variant of Yu's construction in \cite{ANewYu} as well as various other technical refinements.
In a sequel to this paper \cite{RegDist}, we  provide evidence that this new approach can significantly facilitate the development of applications.  The results of \cite{RegDist} involve the theory of distinguished regular supercuspidal representations.  Compared to the theory in \cite{MR2431732}, the results in \cite{RegDist} are more precise and the proofs are considerably simpler.

In order to manufacture the $G$-datum $\Psi_\mu$, Kaletha develops  a theory of Howe factorizations  \cite[\S3.7]{KalYu} that generalizes Roger Howe's $\GL_n$  factorization theory.   
In the present paper, we use a weaker factorization (see Definition \ref{weakfact})  that essentially consists of the depth zero factor in the Kaletha's  factorization together with the product of the remaining factors.   (See  \cite[Corollary to Lemma 11]{MR0492087} and  \cite[\S4.3]{MR2431732} for Howe's theory and its adaptation to Yu's construction.)

One of the main points of our revision of Yu's construction in \cite{ANewYu} is that Howe factorizations are non-canonical objects, a fact that can significantly complicate the development of applications of Yu's construction.  In particular, the paper \cite{ANewHM} applies the  theory in \cite{ANewYu} to  simplify the theory of distinguished tame supercuspidal representations in \cite{MR2431732}.  
Though the weak factorizations in this paper are  not canonical, they are nearly canonical and they preserve most of the simplifications from \cite{ANewYu} while offering a  convenient way to reduce to the depth zero case.

Let us now sketch the structure of this paper.  
We are interested in the regular supercuspidal representations of a group $G = \bG(F)$, where $\bG$ is a connected reductive group that is defined over a field $F$ that is a finite extension of a field $\Q_p$ of $p$-adic numbers  (with the same restrictions on $p$ as in \cite{KalYu}).  
These representations are attached to certain pairs $(\bT,\mu)$ that Kaletha calls ``tame, elliptic, regular pairs.''  Here,  $\bT$ is a suitable elliptic, maximal $F$-torus of $\bG$, and $\mu$ is a suitable character of $T= \bT(F)$.  (See \cite[Definition 3.6.5]{KalYu} and Definition \ref{terp} below.)  Implicit in the existence of such pairs is the assumption that $\bG$ must split over a tamely ramified extension of $F$.

In \S\ref{sec:extinv}, we show how to extract various invariants from $\mu$ that are needed in the construction of the associated regular supercuspidal representation $\pi (\mu)$ of $G$.  These invariants nearly form a generic cuspidal $G$-datum, except that there is no Howe factorization.  For positive depth regular supercuspidal representations, in place of the Howe factorizations (as in \cite[\S3.7]{KalYu}) we use the  ``weak factorizations''  defined in \S\ref{sec:zeroreduction}.  A weak factorization is a pair $(\mu_- , \mu_+)$ of characters, where $\mu_-$ is a depth zero character of $T$ and $\mu_+$ is a positive depth character of $H = \bH (F)$, with $\bH$ being an $F$-subgroup of $\bG$ that becomes a Levi subgroup over  some extension of $F$.  Among other things, it is required that  $\mu (t)= \mu_-(t) \mu_+(t)$  for all $t\in T$.

For those familiar with Yu's construction, we mention that $\bH$ is essentially Yu's group $\bG^0$.
In \S\ref{sec:simplified}, the weak factorization is used to define a certain representation 
$$\rho_T^\mu = \rho_T^{\mu_{-}}\otimes   (\mu_+\,  |\,  TH_{x,0})$$
of a  compact-mod-center subgroup $TH_{x,0}$.  The group $H_{x,0}$ is the analogue of Yu's group $\bG^0( F)_{x,0}$. It is a maximal parahoric subgroup in $H$.  Note that $\rho^\mu_T$ is independent of the choice of weak factorization.

For those familiar with Kaletha's construction, we note that $\rho_T^{\mu_{-}}$ is essentially Kaletha's representation $\tilde\kappa_{(S,\theta)}$, however, we give a more direct definition (in \S\ref{sec:simplified}).  According to Lemma 3.4.12 \cite{KalYu}, the representation $\rho_T^{\mu_-}$ induces an irreducible representation of the stabilizer $H_x$ in $H$ of the vertex $x$ in the reduced building of $H$.  A similar proof shows that $\rho_T^\mu$ induces an irreducible representation $\rho$ of $H_x$.
(See \S3.3 \cite{ANewYu}.)

We show in Lemma \ref{permiss} that $\rho$ is a permissible representation in the sense of \cite[Definition 2.1.1]{ANewYu}.  
Given a permissible representation, such as our $\rho$, the construction in \cite{ANewYu} produces a corresponding supercuspidal representation $\pi (\rho)$.  When $\rho$ is induced from $\rho_T^\mu$, the representation $\pi (\rho)$ is equivalent to Kaletha's representation $\pi (\mu)$.
In sections \ref{sec:first} and \ref{sec:second}, we compare the details of our construction with Kaletha's.

In \S\ref{sec:rhoTmu}, we prove a formula (Proposition \ref{DLformula}) for the character of $\rho_T^\mu$. 
 In formulating the statement of our character  formula, we have made an extra effort to give a statement that  is as simple as possible and most closely resembles the Deligne-Lusztig (virtual) character formula.  This is partly due to the fact that this is what works best for the applications in \cite{RegDist} (which require that we average the character over the fixed points of an involution). 

 Our character formula is similar to Proposition 3.4.14 \cite{KalYu}, which treats the depth zero case, but we have revised Kaletha's treatment in several ways.  For example, in \S\ref{sec:topJoe}, we customize the theory of topological Jordan decompositions (as in \cite{MR2408311}) for our applications to compact-mod-center subgroups.  
 
 The character of $\rho_T^\mu$ may be viewed as an extension of a Deligne-Lusztig character, and, consequently, the proof of the character formula entails a modest extension of the Deligne-Lusztig character formula.  Rather than simply describing the changes required to Deligne-Lusztig's proof, we have included a complete proof.

Our treatment of regular supercuspidal representations provides a new perspective (discussed in \S\ref{sec:first}) on what Yu refers to as the ``inductive structure'' of his construction.  Recall that Yu actually  associates a sequence $\pi_0,\dots ,\pi_d$ of supercuspidal representations to a generic cuspidal $G$-datum $\Psi$, not simply the single representation $\pi (\Psi) = \pi_d$ of the given group $G$.   
(See \cite[Introduction]{MR1824988}
and
\cite[\S4.2]{MR2431732}.)  This sequence depends on the choice of factorization or, in other words,  the representations in the sequence depend partly on the component $\vec\phi$ in Yu's $G$-datum.   In our approach, one has a different, but related, sequence in which  each of the representations in the sequence is a canonical invariant of $\pi (\mu)$.

Finally, we stress that this paper is motivated by applications to distinguished representations in \cite{RegDist}.
If $\pi$ is a regular supercuspidal representation of $G$ and $\theta$ is an involution of $G$, we compute in \cite{RegDist} the dimension of ${\rm Hom}_{G^\theta}(\pi, 1)$, where $G^\theta$ is the group of fixed points of $\theta$.  To do this, we first need a reduction to the study of when $\rho_T^\mu$ is distinguished.  Then we need to average the character of $\rho_T^\mu$ over the fixed points of $\theta$.  In both of these steps, the theory in this paper greatly reduces the effort needed to compute the desired dimensions.  We expect that our theory will also lead to simplifications in other applications that involve regular supercuspidal representations.

\section{The construction of regular supercuspidal representations}

\subsection{Comments on notations}

Assume, for the rest of the paper, that we have fixed a finite extension $F$ of $\Q_p$, and a connected reductive $F$-group $\bG$.  As in \cite{KalYu}, we assume $p$ is not 2 or a bad prime for $\bG$.  Also, as in \cite{KalYu}, we assume $p$ does not divide the order of the fundamental group $\pi_1(\bG_{\rm der})$ of the derived group $\bG_{\rm der}$ of $\bG$, but  this condition may be removed using the theory of $z$-extensions.  This is discussed in  \cite[\S3.9]{KalYu} and \cite[\S3.1]{ANewYu}, and we will also discuss it below in \S\ref{sec:zeroreduction}.

This paper heavily depends on both \cite{ANewYu} and \cite{KalYu}, and, practically speaking, the reader should expect to have both of the latter papers available while reading this paper.  For the most part, we follow the notations of \cite{ANewYu}, which tends to be consistent with \cite{MR2431732} and \cite{MR1824988}.  This applies, in particular, to the many notations associated with Moy-Prasad filtrations.  
In some cases, we follow the conventions of  \cite{KalYu}.  For example, our Moy-Prasad groups are attached to points in  reduced Bruhat-Tits buildings, rather than extended buildings.
We also use the terminology ``character,'' as opposed to ``quasi-character,'' for a smooth 1-dimensional complex representation.

In general, we use boldface letters for $F$-groups and non-boldface for the groups of $F$-rational points.
For example, $G = \bG (F)$.

Assume we have fixed an algebraic closure $\overline{F}$ of $F$ throughout the paper, and that all extensions of $F$ considered are subfields of $\overline{F}$.  Let $F^{\rm un}$ denote the maximal unramified extension of $F$ contained in $\overline{F}$.
Given a finite extension $F'$ of $F$,  let $\mathfrak{f}_{F'}$ denote the residue field of $F'$.

\subsection{Extracting invariants}\label{sec:extinv}

Assume  $\bT$ is a   maximal $F$-torus in $\bG$ and  $\mu : T\to \C^\times$ is an arbitrary (smooth) character
of  $T$.  We  attach to $\mu$ various invariants, some of which require additional restrictions on $\bT$.

The first invariant is an $F$-subgroup $\bH = \bH_\mu$ of $\bG$ that is a Levi subgroup over the algebraic closure $\overline{F}$.  It is constructed as follows.

Fix $\overline F$ and let $\Gamma = {\rm Gal}(\overline F/F)$.
Let $\Phi$ be the root system $\Phi (\bG ,\bT)$.
Then $\Gamma$ acts on roots via $\gamma \cdot a = \gamma \circ a \circ \gamma^{-1}$.
Given a $\Gamma$-orbit $\mathscr{O}$ in $\Phi$, let $\bT_{\mathscr{O}}$ be the subgroup generated by the tori $\bT_a = {\rm image}(\check a)$ as $a$ varies over $\mathscr{O}$.
Then $\bT_{\mathscr{O}}$ is an $F$-torus (see \cite[Corollary 2.2.7, Corollary 3.2.7]{MR1642713}) and we let $T_{\mathscr{O}} = \bT_{\mathscr{O}}(F)$.

\begin{definition}
Let $\mu$ be a character of $T$.  Define $$\Phi_\mu=\bigcup_{{{\mathscr{O}\in \Gamma\bs \Phi}\atop{ \mu | (T_{\mathscr{O}})_{0+}=1}}} \mathscr{O}.$$ 

Here, $(T_{\mathscr{O}})_{0+}$ denotes the subgroup of positive depth elements in $T_{\mathscr{O}}$.
According to \cite[Lemma 3.7.8]{KalYu} and \cite[Lemma 3.5.1]{ANewYu}, $\Phi_\mu$ is a root subsystem of $\Phi$.  
 Define $\bH = \bH_\mu$  to be the unique $\overline{F}$-Levi subgroup of $\bG$ that contains $\bT$ and has root system $\Phi_\mu$.
\end{definition}

More generally, if one replaces $0+$ by  $r+$ in the definition of $\bH_\mu$, for any nonnegative number $r$, then, as $r$ varies,  one obtains a tower 
$$\bH = \bG^0 \subsetneq \cdots \subsetneq \bG^d = \bG$$ of Levi subgroups of $\bG$.
Our first invariant associated to a character $\mu$ of $T$ is the sequence
$$\vec\bG =(  \bG^0,\dots , \bG^d).$$

For the next invariant, we need to assume $\bT$ is tame and elliptic.  (In other words, $\bT$ splits over a tamely ramified extension of $F$, and $\bT $ is anisotropic modulo the center $Z(\bG)$ of $\bG$.)
Let $E$ be the splitting field for $\bT$ over $F$.  Then $E/F$ is a finite, Galois, tamely ramified extension.  (See \cite[Proposition 3.2.12(i), Proposition 13.2.2(i)]{MR1642713}.)  Over $E$, the torus $\bT$ defines an apartment in the reduced building of $\bG$.  We let $\bG_{\rm der}$ denote the derived group of $\bG$, and we also take $\bT_{\rm der} = \bT\cap \bG_{\rm der}$.  Then $\bT\cap \bG_{\rm der}$ is connected and, in fact, a maximal torus in $\bG_{\rm der}$ that is defined and anisotropic over $F$.  
(See  \cite[Proposition 8.1.8(iii)]{MR1642713}.) The apartment of $\bT$ in the reduced building of $\bG (E)$ is the same as the apartment of $\bT\cap \bG_{{\rm der}}$ in the extended building of $\bG_{{\rm der}}(E)$  or, in symbols, $$\mathscr{A}_{\rm red}(\bG ,\bT,E) = \mathscr{A}(\bG_{\rm der},\bT\cap \bG_{\rm der},E).$$

Next, we observe that since $\bT\cap \bG_{\rm der}$ is $F$-anisotropic, there are no  nontrivial cocharacters in $X_*(\bT\cap \bG_{\rm der})$ that are fixed by ${\rm Gal}(E/F)$, and thus
$$\mathscr{A}_{\rm red}(\bG ,\bT,E)^{{\rm Gal}(E/F)}$$ consists of a single point.

Let 
$x = x_{\bT}$ denote this point.

According to a result in Guy Rousseau's thesis (that is the focus of \cite{MR1871292}), we have an identification
$$\mathscr{B}_{\rm red}(\bG ,F) = \mathscr{B}_{\rm red}(\bG,E)^{{\rm Gal}(E/F)},$$ since $E/F$ is tamely ramified.
So, in fact, the point $x$ lies in $\mathscr{B}_{\rm red}(\bG,F)$ and we have
$$\mathscr{B}_{\rm red}(\bG,F)\cap \mathscr{A}_{\rm red}(\bG,\bT,E) = \{ x\}.$$

Note that $T$ is contained in the stabilizer $G_x$ of $x$ in $G$.

Above, we considered the characters $\mu | (T_{\mathscr{O}})_{0+}$.  Consider now the {\it positive} numbers that occur as depths of these characters.  Listing these numbers in order, we obtain a sequence
$$r_0<r_1<\dots <r_d.$$  (We do not mean to suggest that all of the latter characters have positive depth.)
Let $$\vec r = (r_0,\dots , r_d).$$

Once one has the invariants $\vec\bG$, $x$, and $\vec r$ associated to $(\bT,\mu)$,  a plethora of important  subgroups of $G$ can be defined using Moy-Prasad filtrations and the Bruhat-Tits theory of concave functions, exactly as in \cite{MR1824988}.  
Of primary importance are the subgroups $K_+\subset K$ defined in sections 2.5 and 2.6 of \cite{ANewYu}.
Since we are (mostly) following the notational conventions of \cite{ANewYu}, we refer to \S2  and Lemma 3.9.1 in \cite{ANewYu} for the definitions of the other relevant subgroups.

As discussed above, $K$ is a compact-mod-center subgroup of $G$ such that $\pi (\mu)$ is induced from a certain irreducible representation $\kappa$ of $K$.  (It is essentially the same inducing subgroup used by Yu.)

The subgroup $K_+$ is a compact subgroup of $K$ such that 
$$\kappa |K_+ = \hat\phi \cdot {\rm Id},$$ for a certain  character $\hat\phi$ of $K_+$ defined in \cite[\S3.9]{ANewYu}.

For readers needing some extra intuition regarding $\hat\phi$, we offer the following  heuristic.  The inducing representation $\kappa$ amalgamates various Heisenberg representations, as well as other things.  In restricting to $K_+$, one is restricting to the centers of the various relevant Heisenberg groups.  So, roughly speaking, $\hat\phi$ amalgamates the central characters of the various Heisenberg representations.

The following definition is equivalent to Definition 3.6.5 in \cite{KalYu}  (according to Fact 3.4.1 \cite{KalYu}):

\begin{definition}\label{terp}
A pair $(\bT,\mu)$ is called {\bf a tame, elliptic, regular pair} if:
\begin{itemize}
\item[(1)] $\bT$ is a tame,  elliptic, maximal $F$-torus in $\bG$.
\item[(2)] $\mu$ is a character of $T$ such that $\bT$ is a maximally unramified subtorus of $\bH = \bH_\mu$.
\item[(3)] Any element of $H$ that normalizes $\bT$ and fixes $\mu |T_{0}$ must lie in $T$.
\end{itemize}
\end{definition}

We have already given some indication of the consequences of Condition (1).

Regarding Condition (2), we refer to \S3.4.1 \cite{KalYu} for a discussion of maximally unramified tori.  (See also \S2 \cite{MR1824988}.)  Lemma 3.4.2 \cite{KalYu} implies that if Conditions (1) and (2) hold then $x = x_{\bf T}$ must in fact be a vertex in $\mathscr{B}_{\rm red}(\bH,F)$.  Here, we view $\mathscr{B}_{\rm red}(\bH,F)$ as a subset of $\mathscr{B}_{\rm red}(\bG ,F)$.  What is perhaps more relevant is that there is a natural identification of the apartments associated to $\bT$ in the reduced buildings of $\bH$ and $\bG$ over $E$.
 
Condition (3) is called ``the regularity condition.''  When $\mu$ has depth zero, Condition (3) is the same as the condition that the character of $\mathsf{T}(\mathfrak{f}_F) = T_{0:0+}$ associated to $\mu$ is in general position and thus parametrizes an irreducible, cuspidal Deligne-Lusztig representation of $\mathsf{H}_x^\circ (\mathfrak{f}_F) = H_{x,0:0+}= G_{x,0:0+}$.  (See \cite[Fact 3.4.11]{KalYu}.)  When $\mu$ has positive depth, similar remarks apply to the depth zero component $\mu_{-1}$ of any Howe factorization of $\mu$.  
The fact that $x$ must be a vertex in $\mathscr{B}_{\rm red}(\bH ,F)$ is essential in the depth zero theory.  (See Proposition 6.8 \cite{MR1371680}, as well as \cite[Remark 3.7]{MR1824988} and \cite[\S3.4]{KalYu}.)

\subsection{Reduction to depth zero}\label{sec:zeroreduction}

It is  convenient to suitably factor a given character $\mu$ of $T$ into a depth zero piece and a positive depth piece.  This allows us to structure our arguments as reductions to the depth zero case.

Recall that we are assuming that the residual characteristic $p$ of $F$ divides the order of the fundamental group $\pi_1(\bG_{\rm der})$ of the derived group $\bG_{\rm der}$ of $\bG$.  
If we did not make this assumption,  there would be a technical obstruction to such a reduction-to-depth-zero strategy that could be addressed with the theory of $z$-extensions  as follows.  Let $$1\to \bN\to \bG^\sharp \to \bG\to 1$$ be a $z$-extension of $\bG$, as in \cite[\S3.1]{ANewYu}.  Then $G \cong G^\sharp /N$ and hence a representation of $G$ may be regarded as a representation of $G^\sharp$ with trivial restriction to $N$.  
But $\pi_1(\bG^\sharp_{\rm der})$ is trivial since $\bG^\sharp_{\rm der}$ is simply-connected. So we may as well assume the order of $\pi_1(\bG_{\rm der})$ is not divisible by $p$, since this becomes true after passing to a $z$-extension.

Note also that  if $p$ does not divide the order of $\pi_1(\bG_{\rm der})$ then it also does not divide the order of $\pi_1(\bH_{\rm der})$.  (See \S3.1 and Remark 2.1.2 in \cite{ANewYu} for more details on these matters.)

We make use of the following weak substitute for  Kaletha's Howe factorization:

\begin{definition}\label{weakfact}
If $(\bT,\mu)$ is a tame, elliptic, regular pair for $\bG$ then a {\bf weak  factorization} of $\mu$ is a pair $(\mu_-,\mu_+)$ consisting of a character $\mu_-$ of $T$ and a character $\mu_+$ of $H_\mu$ such that
\begin{itemize}
\item[$\bullet$] $(\bT, \mu_-)$ is a depth zero tame, elliptic, regular pair for  $\bH_\mu$.
\item[$\bullet$]  $\mu = \mu_- (\mu_+|T)$.
\end{itemize}
\end{definition}

The existence of weak  factorizations follows from the existence of Kaletha's Howe factorizations, as we show in the proof of the following:

\begin{lemma} 
Every tame, elliptic, regular pair admits a weak  factorization.
\end{lemma}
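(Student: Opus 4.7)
The plan is to derive the weak factorization directly from Kaletha's existence theorem for Howe factorizations \cite[\S3.7]{KalYu}. Fix such a Howe factorization $(\phi_{-1},\phi_0,\ldots,\phi_d)$ of $\mu$, where $\phi_{-1}$ is a depth zero character of $T$, each $\phi_i$ for $i\geq 0$ is a positive-depth character of $\bG^i(F)$ (with $\phi_d$ possibly trivial), and $\mu = \phi_{-1}\cdot\prod_{i=0}^d (\phi_i|_T)$. Set $\mu_- := \phi_{-1}$, and, exploiting that $H=\bG^0(F)$ sits inside each $\bG^i(F)$, set $\mu_+ := \prod_{i=0}^d (\phi_i|_H)$. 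The product identity $\mu = \mu_-(\mu_+|_T)$ is then immediate, giving the second bullet of Definition~\ref{weakfact}.

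It remains to check that $(\bT,\mu_-)$ is a depth zero tame, elliptic, regular pair for $\bH = \bH_\mu$. Tameness and ellipticity of $\bT$ viewed inside $\bH$ are inherited from the original pair $(\bT,\mu)$. Because $\mu_-$ has depth zero, it is trivial on $(T_{\mathscr{O}})_{0+}$ for every $\Gamma$-orbit $\mathscr{O}$ in $\Phi_\mu$, so the Levi subgroup of $\bH$ associated to $\mu_-$ is $\bH$ itself; the maximally-unramified condition (2) for $(\bT,\mu_-)$ inside $\bH$ therefore coincides with condition (2) for $(\bT,\mu)$, which is part of the hypothesis.

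The crux is the regularity condition. Suppose $n \in H$ normalizes $\bT$ and fixes $\mu_-|_{T_0}$; I need $n \in T$. For each $i \geq 0$, $\phi_i$ is a one-dimensional character of the group $\bG^i(F)$, hence invariant under its inner automorphisms; in particular, since $n \in H \subseteq \bG^i(F)$ normalizes $\bT$, the restriction $\phi_i|_T$ is $n$-invariant. Combined with the hypothesis on $\mu_-$, this shows that $\mu = \mu_-\cdot\prod_i (\phi_i|_T)$ is $n$-invariant on $T_0$, whereupon regularity of the original pair $(\bT,\mu)$ forces $n \in T$. The only potential obstacle one might worry about, namely how to transfer regularity of $\mu$ into regularity of $\mu_-$ across the tower of Levi subgroups, dissolves precisely because each higher $\phi_i$ is a genuine character of the larger Levi $\bG^i(F)$ rather than merely of $T$, so its restriction to $T$ is automatically fixed by any normalizer element of $\bT$ lying in $\bG^i(F)$; no serious difficulty arises.
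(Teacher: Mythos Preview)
Your proof is correct and follows the same approach as the paper: take a Howe factorization from \cite[\S3.7]{KalYu}, set $\mu_-=\phi_{-1}$ and $\mu_+=\prod_{i=0}^d(\phi_i|_H)$. The only difference is that the paper invokes \cite[Proposition~3.7.4]{KalYu} directly for the fact that $(\bT,\mu_{-1})$ is a depth-zero tame, elliptic, regular pair for $\bH_\mu$, whereas you verify this by hand; your verification is sound (and makes the argument slightly more self-contained).
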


\begin{proof}
Suppose $(\bT,\mu)$ is a tame, elliptic, regular pair for $\bG$.  Then, according to Proposition 3.7.4 \cite{KalYu}, $\mu$ admits a Howe factorization $\mu_{-1},\dots, \mu_d$, in the sense of Definition 3.7.1 of \cite{KalYu} with the additional property that $(\bT, \mu_{-1})$ is a depth zero tame, elliptic, regular pair.  Taking $\mu_-= \mu_{-1}$ and $$\mu_+ = \prod_{i=0}^d \mu_i|H$$ gives the required weak  factorization of $\mu$.
\end{proof}

The notations ``$\mu_-$'' and ``$\mu_+$'' reflect that $\mu_+$ contains the positive depth content of $\mu$, while $\mu_-$ can be viewed as the $\mu_{-1}$ factor in a Howe factorization.  As with Howe factorizations, our weak factorizations are not unique.  (This is discussed in more detail in \S3.3 \cite{ANewYu}.)

\begin{definition}\label{musharp}
Given a tame, elliptic, regular pair $(\bT,\mu)$ for $\bG$ and a weak  factorization $(\mu_-,\mu_+)$,  we define characters $\mu_-^\sharp$, $\mu_+^\flat$, and $\mu^\sharp$ of $TH_{x,0+}$ by:
\begin{itemize}
\item[$\bullet$] $\mu_-^\sharp$ is the unique character of $TH_{x,0+}$ that coincides with $\mu_-$ on $T$ and is trivial on $H_{x,0+}$.
\item[$\bullet$] $\mu_+^\flat$ is the restriction of $\mu_+$ to $TH_{x,0+}$.
\item[$\bullet$] $\mu^\sharp = \mu_-^\sharp \mu_+^\flat$.
\end{itemize}
\end{definition}

\begin{lemma}\label{mushlemma}
In Definition \ref{musharp}, the character $\mu^\sharp$ is an extension of $\mu$ from $T$ to $TH_{x,0+}$ that is independent of the choice of weak  factorization.\end{lemma}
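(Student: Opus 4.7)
The plan is to split the assertion into two parts: first that $\mu^\sharp$ is a well-defined character of $TH_{x,0+}$ extending $\mu$ for any fixed weak factorization, and then that the resulting character does not depend on the weak factorization chosen.

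For well-definedness, the only nontrivial check is that $\mu_-^\sharp$ really is a character of $TH_{x,0+}$: it is prescribed to equal $\mu_-$ on $T$ and to be trivial on $H_{x,0+}$, and these two prescriptions must agree on $T\cap H_{x,0+}$. Standard Moy--Prasad theory yields $T\cap H_{x,0+} = T_{0+}$, and since $(\bT,\mu_-)$ is a depth-zero regular pair, $\mu_-|T_{0+} = 1$, so the two prescriptions agree. The factor $\mu_+^\flat$ is defined as a restriction and needs no such check. The extension property $\mu^\sharp|T = \mu$ then follows immediately from the weak factorization identity $\mu_-(\mu_+|T) = \mu$.

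For independence, suppose $(\mu_-,\mu_+)$ and $(\mu_-',\mu_+')$ are two weak factorizations of $\mu$. Writing $\alpha = \mu_-'\mu_-^{-1}$ and $\beta = \mu_+(\mu_+')^{-1}$, the identity $\mu_-(\mu_+|T) = \mu = \mu_-'(\mu_+'|T)$ forces $\beta|T = \alpha$, and $\alpha$ has depth zero as a ratio of two depth-zero characters. A direct computation identifies the ratio of the two candidate $\mu^\sharp$ with the character of $TH_{x,0+}$ that is trivial on $T$ and agrees with $\beta|H_{x,0+}$ on $H_{x,0+}$. Thus the whole problem reduces to the single claim
\[
\beta|H_{x,0+} = 1.
\]

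The main obstacle is this last claim. The plan is to invoke the standard fact, valid under the running tameness hypotheses on $p$, that the depth of a character of a connected reductive $p$-adic group coincides with the depth of its restriction to any maximal $F$-torus. Since $\beta|T = \alpha$ has depth zero, this forces $\beta$ to have depth zero on $H$, which is precisely the statement $\beta|H_{x,0+} = 1$. Morally, $H_{x,0+}$ decomposes via Moy--Prasad into $T_{0+}$ together with contributions from affine root groups, and a character of the ambient group $H$ must kill the latter contributions for commutator-theoretic reasons (each affine root group lies in the commutator subgroup $[T,U_a]$ under our hypotheses on $p$), leaving only the contribution from $T_{0+}$, on which $\beta$ is trivial by assumption.
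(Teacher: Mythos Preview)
Your proof is correct and follows essentially the same route as the paper's: both reduce independence to showing that the ratio $\beta=\mu_+(\mu_+')^{-1}$ of the positive-depth factors is trivial on $H_{x,0+}$, using that $\beta$ is trivial on $T_{0+}$ (since the depth-zero factors agree there) together with the fact that, under the hypothesis $p\nmid|\pi_1(\bH_{\rm der})|$, a character of $H$ trivial on $T_{0+}$ is trivial on $H_{x,0+}$. The paper cites Lemmas~3.2.1 and~3.4.5 of \cite{ANewYu} for this last step where you invoke it as a standard depth comparison; your additional explicit check that $\mu_-^\sharp$ is well-defined (via $T\cap H_{x,0+}=T_{0+}$) is a point the paper leaves implicit.
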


\begin{proof}
It is obvious that $\mu^\sharp$ agrees with $\mu$ on $T$.  So it suffices to show that the restriction of $\mu^\sharp$ to $H_{x,0+}$ is independent of the choice of weak Howe factorization.    Suppose we have two weak factorizations, $(\mu_-,\mu_+)$ and $(\dot\mu_- , \dot\mu_+)$.
Then the characters $\mu_+$ and $\dot\mu_+$ both coincide with $\mu$ on $T_{0+}$.
So the character $\mu_+^{-1}\dot\mu_{+}$ of $H$ must be trivial on $T_{0+}$.
Consequently, it must be trivial on $H_{x,0+}$, according to Lemmas 3.2.1 and 3.4.5 of \cite{ANewYu}.  (Here, we are using the fact that $p$ does not divide the order of $\pi_1(\bH_{\rm der})$ and the fact that $\mu_+^{-1}\dot\mu_{+}$ is trivial on $[H,H]\cap H_{x,0+}$.)
Our claim now follows.\end{proof}

\begin{corollary}
In Definition \ref{musharp},
the restrictions of the characters $\mu_+$ and $\mu_+^\flat$ to $H_{x,0+}$ coincide with $\mu^\sharp |H_{x,0+}$.  In particular, $\mu_+|H_{x,0+}$ is independent of the choice of weak factorization.
\end{corollary}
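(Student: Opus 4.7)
The plan is to observe that every assertion of the corollary follows by chasing the definitions from Definition \ref{musharp} and then quoting Lemma \ref{mushlemma}.

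First I would note that $\mu_+^\flat$ was defined as the restriction of $\mu_+$ to $TH_{x,0+}$, so restricting further to the subgroup $H_{x,0+}\subseteq TH_{x,0+}$ gives $\mu_+^\flat|H_{x,0+}=\mu_+|H_{x,0+}$ tautologically. Next I would use the factorization $\mu^\sharp=\mu_-^\sharp\mu_+^\flat$ from Definition \ref{musharp} together with the stipulation that $\mu_-^\sharp$ is trivial on $H_{x,0+}$; this yields
\[
\mu^\sharp|H_{x,0+}=\bigl(\mu_-^\sharp|H_{x,0+}\bigr)\cdot\bigl(\mu_+^\flat|H_{x,0+}\bigr)=\mu_+^\flat|H_{x,0+}=\mu_+|H_{x,0+},
\]
which establishes the first sentence of the corollary.

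For the second sentence, I would invoke Lemma \ref{mushlemma}, which asserts that $\mu^\sharp$ is independent of the weak factorization chosen. Combined with the equality $\mu_+|H_{x,0+}=\mu^\sharp|H_{x,0+}$ just obtained, this immediately gives the independence of $\mu_+|H_{x,0+}$ from the choice of weak factorization.

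Since all three claims reduce either to unwinding Definition \ref{musharp} or to a direct appeal to Lemma \ref{mushlemma}, there is no substantive obstacle here; the corollary is essentially a bookkeeping consequence of the preceding lemma, and the only thing to make sure of is that each restriction is taken on the appropriate subgroup of $TH_{x,0+}$.
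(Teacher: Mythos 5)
Your proof is correct and matches what the paper intends: the corollary is stated without proof because it follows immediately from Definition \ref{musharp} (triviality of $\mu_-^\sharp$ on $H_{x,0+}$ plus $\mu_+^\flat$ being a restriction of $\mu_+$) and Lemma \ref{mushlemma} (independence of $\mu^\sharp$ from the weak factorization). Your chain of equalities and the final appeal to the lemma are exactly the intended argument.
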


Once again, we emphasize that in the definitions and results in this section we have assumed that $p$ does not divide the order of $\pi_1(\bG_{\rm der})$.

\subsection{Constructing regular supercuspidal represen\-ta\-tions: first approach}\label{sec:first}

Properly speaking, instead of saying that a pair $(\bT,\mu)$ is a ``tame, elliptic, regular pair,'' one should say that it is ``tame, elliptic, regular pair relative to $\bG$.''  Indeed, if one replaces $\bG$ by any of the associated subgroups $\bG^i$ then $(\bT,\mu)$ may be viewed as a tame, elliptic, regular pair relative to $\bG^i$.  Accordingly,  Kaletha's theory associates to each tame, elliptic, regular pair $(\bT,\mu)$ relative to $\bG$, a sequence 
$$\pi_0,\dots, \pi_d$$ of supercuspidal representations of $G^0,\dots,G^d$, respectively.
  The depth of each $\pi_i$ is the same as the depth of $\mu$.   The equivalence class of $\pi_i$ is canonically associated to $\mu$.
  
This is not the same as Yu's sequence of representations, but there is a simple relation, discussed below, between the two sequences.  Yu's sequence depends on the Howe factorization.

There is a natural way to associate to $\mu$  an irreducible representation $\rho_T^\mu$ of $TH_{x,0}$.   When $\bT$ is anisotropic and $\mu$ has depth zero, then $TH_{x,0}= H_{x,0}$ and $\rho_T^\mu$ is just the pullback to $H_{x,0}$ of the Deligne-Lusztig irreducible cuspidal representation of $\mathsf{H}_x^\circ (\mathfrak{f}_F) = H_{x,0:0+}$ associated to the character of $\mathsf{T}(\mathfrak{f}_F)= T_{0:0+}$ associated to $\mu$.
In general, when $\mu$ has depth zero, the definition of $\rho_T^\mu$ is given in \cite[\S3.4.4]{KalYu}.  (The notation $\tilde\kappa_{(S,\theta)}$ is used there.)

We will discuss the construction of $\rho_T^\mu$ for arbitrary depths in full detail later in this paper.

The first  supercuspidal representation in the sequence  $\pi_0,\dots ,\pi_d$ is given by
$$\pi_0 = \pi_0 (\mu) = {\rm ind}_{TH_{x,0}}^H(\rho^\mu_T).$$
When $\mu$ has depth zero, we have $H= G$ and  $\pi = \pi(\mu)= \pi_0$.

In \S3.7 of \cite{KalYu}, Kaletha defines what it means for a sequence $\mu_{-1},\dots , \mu_d$ to be a ``Howe factorization'' of $\mu$, and he proves the existence of Howe factorizations.  Once we are given a Howe factorization, if we take
$$\vec\mu = (\mu_0,\dots , \mu_d),$$  then  the triple
$$\Psi_{\vec\mu} = \left( \vec\bG , {\rm ind}_{TH_{x,0}}^H(\rho_T^{\mu_{-1}}), \vec\mu \right)$$
is a datum of the type used by Yu \cite[\S3]{MR1824988} in his construction.
Kaletha's representation $\pi (\mu)$ is Yu's representation $\pi (\Psi_{\vec\mu})$.

The equivalence class of $\pi (\mu)$ does not depend on the choice of Howe factorization of $\mu$.  To construct the other members of the sequence $\pi_0,\dots, \pi_d$, one uses the same character $\mu$, but one replaces $\bG$ by each of the other groups $\bG^i$.

By contrast, the $i$-th representation in Yu's sequence is attached to $\bG^i$ and the character $\prod_{j=-1}^i (\mu_j | T)$ of $T$.  So, other than $\pi= \pi_d$, the representations in Yu's sequence depend on the Howe factorization.

\subsection{Constructing regular supercuspidal represen\-ta\-tions: second approach}\label{sec:second}

Let us now describe in rough terms  how to construct $\pi(\mu)$ following the  alternative to Yu's construction outlined in \cite[\S2]{ANewYu}.    Let $H_x$ be the stabilizer in $H$ of the point $x$ (which we recall is a vertex in the reduced building of $\bH$).  Let $$\rho  = \rho(\mu) ={\rm ind}_{TH_{x,0}}^{H_x}(\rho_T^\mu).$$

These representations $\rho$ of $H_x$ are examples of the ``permissible representations''  that appear in \cite[Definition 2.1.1]{ANewYu}.  Permissible representations (as opposed to Kaletha's characters $\mu$)  are the atomic particles that parametrize the supercuspidal representations in \cite{ANewYu}.

It turns out that if one restricts the permissible representation $\rho$ to $H_{x,0+}$, then $$\rho |H_{x,0+} = \phi \cdot {\rm Id},$$ for some (canonical) character $\phi$ of $H_{x,0+}$.  In other words, $\rho |H_{x,0+}$ is $\phi$-isotypic.  The character $\hat\phi$ of $K_+$ mentioned above is a canonical extension of $\phi$ to $K_+$.  (See \S2.6 and \S3.9 \cite{ANewYu}.)

In place of Howe factorizations, the theory in \cite{ANewYu} employs  additional invariants that represent the canonical common essence of the various factorizations.  
We now provide a quick description of them.

Consider the restrictions $\phi |Z^{i,i+1}_{r_i}$, where
$$Z^{i,i+1}_{r_i}= (\bG^{i+1}_{\rm der}\cap \bZ^i)^\circ(F)_{r_i},$$
 $\bG^{i+1}_{\rm der}$ is the derived group of $\bG^{i+1}$, and $\bZ^i$ is the center of $\bG^i$.   
Using standard duality theory, one can equate such restrictions with clusters of elements in the dual of the Lie algebra of $\bG^{i+1}$.  This is done in \cite[\S2.7]{ANewYu}.  The cluster of elements   is referred to as  {\it the dual coset of $\phi |Z^{i,i+1}_{r_i}$}  and it is denoted by $(\phi |Z^{i,i+1}_{r_i})^*$.

 The elements of the dual coset are the generic elements that are used to construct certain Weil representations, analogous to the situation in the Howe/Yu constructions.  Restricting each of the latter representations, we obtain representations $\omega_i$ of $H_x$ for $0\le i\le d-1$.  (More precisely, the dual coset $(\phi |Z^{i,i+1}_{r_i})^*$ gives rise to a Weil representation that is defined on a certain finite symplectic group.  Embedded in the symplectic group is a subgroup that is a quotient of $H_x$.  The restriction of the Weil representation to this subgroup pulls back to $\omega_i$.)

To specify the equivalence class of our inducing representation $\kappa$ of $K$, it suffices to define $\kappa$ on the support of its character.
This can be done canonically.

First, we define $\kappa$ on $H_x$ to be the tensor product
$${\rm ind}_{TH_{x,0}}^{H_x} (\rho_T^\mu) \otimes \omega_0 \otimes \cdots \otimes \omega_{d-1}$$ of representations of $H_x$.
Next, we define $\kappa$ on $K_+$ to be a multiple of the character $\hat\phi$.
Since the character of $\kappa$ is supported in $H_xK_+$, the latter conditions completely determine the character, and hence the equivalence class, of $\kappa$.

In \S3.11 \cite{ANewYu}, it is shown that a representation $\kappa$ with the prescribed character actually exists.

\subsection{A simplified construction of the representation $\rho_T^\mu$ of $TH_{x,0}$}\label{sec:simplified}

The Deligne-Lusztig construction of a virtual representation of a finite group of Lie type requires a choice of a Borel subgroup.  But this choice is not evident in the Deligne-Lusztig character formula.  So, while the virtual representation depends on the choice of the Borel subgroup,  its equivalence class does not. 

Similarly, in this section, we use a choice of weak factorization to construct $\rho_T^\mu$, and later we show  (via 
the character formula in Proposition \ref{DLformula}) that the equivalence class of $\rho_T^\mu$ is independent of the weak factorization.

Suppose $(\bT,\mu)$ is a tame, elliptic, regular pair.  Using Proposition 3.7.4 \cite{KalYu}, Kaletha chooses a Howe factorization $\mu_{-1},\dots ,\mu_d$ of $\mu$ and then constructs in \S3.4.3 \cite{KalYu} a representation $\rho_T^{\mu_{-1}}$  (denoted by $\tilde\kappa_{(S,\theta)}$ in \cite{KalYu}).  Then he follows the approach sketched in \S\ref{sec:second} above.

Our modification of Kaletha's approach uses weak factorizations $(\mu_-,\mu_+)$ of $\mu$.  Again, we need to construct $\rho_T^{\mu_{-}} = \rho_T^{\mu_{-1}}$, but we use a simplified approach.  
Once $\rho_T^{\mu_{-}}$ is defined, we take
$$\rho_T^\mu = \rho_T^{\mu_{-}}\otimes   (\mu_+\,  |\,  TH_{x,0}) .$$

When $p$ divides the order of $\pi_1(\bG_{\rm der})$, we replace $\bG$ with $\bG^\sharp/\bN$, as discussed above in \S\ref{sec:zeroreduction} and Remark 2.1.2 in \cite{ANewYu}.
The representation $\rho_T^\mu$ is defined over the $z$-extension, again as a product of a depth zero representation (in the obvious sense) and a positive depth character.  When one descends from $\bG^\sharp$ back to $\G$, the factors in the definition of $\rho_T^\mu$ are not necessarily defined over $\bG$ (in other words, they may not be trivial on $\bN$), but the product is well-defined.

The construction of $\rho_T^{\mu_{-}}$ starts with the Deligne-Lusztig construction of irreducible, cuspidal representations of finite groups of Lie type.  According to
Fact 3.4.11 \cite{KalYu}, the character $\mu_{-}|T_0$ factors to a general position character $\bar\mu_{-}$ of the  torus $\mathsf{T}(\mathfrak{f}_F) = T_{0:0+}$ in $\mathsf{H}_x^\circ (\mathfrak{f}_F)= H_{x,0:0+}$.  Adjusting the sign of the associated Deligne-Lusztig virtual representation $R_{\mathsf{T}}^{\bar\mu_{-}}$, we obtain an irreducible cuspidal representation $(-1)^{\ell (w)} R_{\mathsf{T}}^{\bar\mu_{-}}$ of $\mathsf{H}^\circ_x(\mathfrak{f}_F)$, where $w$ is the Weyl group element that corresponds to the $\mathsf{H}_x^\circ(\mathfrak{f}_F)$-conjugacy class of $\mathsf{T}$, as in Corollary 1.14 \cite{MR0393266}.  (See \S4.2 \cite{ANewHM} for another interpretation of these signs.)

The restriction of $\rho_T^{\mu_{-}}$ to $H_{x,0}$ is defined to be the pullback of $(-1)^{\ell (w)} R_{\mathsf{T}}^{\bar\mu_{-}}$.

Recall that, in Deligne-Lusztig's construction, one must choose a Borel subgroup $\mathsf{B}$ that contains $\mathsf{T}$ (but is not stable under the Frobenius automorphism ${\rm Fr}$).  One then considers the $\ell$-adic cohomology with compact supports of the variety
$$\mathsf{X} = \mathsf{X}_{\mathsf{T}\subset \mathsf{B}} =  \{ h\in \mathsf{H}_x^\circ \ :\ h^{-1}{\rm Fr}(h)\in \mathsf{U}\},$$
where $\mathsf{U}$ is the unipotent radical of $\mathsf{B}$.

A basic property of $\ell$-adic cohomology that follows directly from the definitions is that  every automorphism of $\mathsf{X}$ induces a linear automorphism of $H^i_c(\mathsf{X},\overline{\Q}_\ell)$, for each $i$.  (See Property 7.1.3 \cite{MR1266626}.)

There are three relevant group actions on $\mathsf{X}$ that yield actions on cohomology.

First, there is the action of $\mathsf{T}(\mathfrak{f}_F)$ on $\mathsf{X}$ by right translations.  Let
$$H^i_c(\mathsf{X},\overline\Q_\ell)_{\bar\mu_{-}}= \left\{ v\in H^i_c(\mathsf{X},\overline\Q_\ell) \ :\ v\cdot t = \bar\mu_{-} (t)v,\ \forall t\in \mathsf{T}(\mathfrak{f}_F) \right\}.$$

The Deligne-Lusztig representation spaces are defined as alternating sums involving the  spaces $H^i_c(\mathsf{X},\overline\Q_\ell)_{\bar\mu_{-}}$.  Thanks to a result of He \cite{MR2427638},  in our case, the sums have precisely one nonzero term, namely, the $i= (-1)^{\ell (w)}$ term.  

So the representation space of $\rho_T^{\mu_{-}}$ is the space
$$\mathsf{V}_{\bar\mu_{-}} = H^{\ell (w)}_c(\mathsf{X},\overline\Q_\ell)_{\bar\mu_{-}}.$$

The second group action on $\mathsf{X}$ is the action of $\mathsf{H}_x^\circ (\mathfrak{f}_F)$ by left translations.   
The representation $(-1)^{\ell (w)} R_{\mathsf{T}}^{\bar\mu_{-}}$ is the corresponding linear action of $\mathsf{H}_x^\circ (\mathfrak{f}_F)$ on $\mathsf{V}_{\bar\mu_{-}}$.

The third group action on $\mathsf{X}$ is an action of $T$ that was defined in \S3.4.4 \cite{KalYu}.  We now define this action in a simpler, but equivalent, way.

Recall that $\mathsf{X}$ is a subvariety of $\mathsf{H}_x^\circ$.  Let us view the elements $\mathsf{H}_x^\circ$ as cosets via the identification $\mathsf{H}_x^\circ = \bH (F^{\rm un})_{x,0:0+}$.  Since $T$ fixes $x$, the group $T$ normalizes both $\bH (F^{\rm un})_{x,0}$ and $\bH (F^{\rm un})_{x,0+}$, and hence it acts by conjugation on $\mathsf{H}_x^\circ$.  

\begin{lemma}
The conjugation action of $T$ on $\mathsf{H}_x^\circ$ stabilizes $\mathsf{X}$.
\end{lemma}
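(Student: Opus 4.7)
The plan is to verify directly that for $t \in T$ and $h \in \mathsf{X}$, the conjugate $tht^{-1}$ still satisfies the defining condition $(tht^{-1})^{-1}{\rm Fr}(tht^{-1}) \in \mathsf{U}$. This reduces to two ingredients: that ${\rm Fr}$ commutes with conjugation by $t$ on $\mathsf{H}_x^\circ$, and that conjugation by $t$ stabilizes $\mathsf{U}$.

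For the first ingredient, I would use the identification $\mathsf{H}_x^\circ = \bH(F^{\rm un})_{x,0:0+}$, under which ${\rm Fr}$ is induced by the Frobenius $\sigma \in {\rm Gal}(F^{\rm un}/F)$. Since $t \in T = \bT(F)$ is $F$-rational, we have $\sigma(t) = t$, so for any lift $\tilde h$ of $h$ one has $\sigma(t \tilde h t^{-1}) = t\,\sigma(\tilde h)\,t^{-1}$. Passing to the quotient gives ${\rm Fr}(tht^{-1}) = t\,{\rm Fr}(h)\,t^{-1}$, from which the manipulation
$$(tht^{-1})^{-1}{\rm Fr}(tht^{-1}) \;=\; t\bigl(h^{-1}{\rm Fr}(h)\bigr)t^{-1}$$
is immediate, and this lies in $t\mathsf{U}t^{-1}$ whenever $h \in \mathsf{X}$.

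The second ingredient, $t\mathsf{U}t^{-1} = \mathsf{U}$, requires slightly more care. Although $t$ may fail to lie in $H_{x,0}$, it does fix $x$, so it normalizes both $\bH(F^{\rm un})_{x,0}$ and $\bH(F^{\rm un})_{x,0+}$ and therefore induces an algebraic automorphism of $\mathsf{H}_x^\circ$ that is defined over $\mathfrak{f}_F$ and restricts to the identity on $\mathsf{T}$ (since $\bT$ is abelian). Because $\bT$ is abelian, conjugation by $t$ preserves each root group $\bU_a$ of $(\bH,\bT)$, acting by the scalar $a(t)$ in the standard parametrization; passing to those affine root groups of $(\bH,\bT)$ that vanish at $x$ and then reducing modulo the prime shows that the induced automorphism of $\mathsf{H}_x^\circ$ preserves each root subgroup $\mathsf{U}_a$ relative to $\mathsf{T}$. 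Since $\mathsf{U}$ is the product of the $\mathsf{U}_a$ for $a$ positive relative to $\mathsf{B}$, we obtain $t\mathsf{U}t^{-1} = \mathsf{U}$, completing the argument.

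The main obstacle is bookkeeping rather than depth: one must carefully track that the conjugation action of an element $t \in T$ not necessarily lying in the parahoric descends to a well-defined $\mathfrak{f}_F$-algebraic automorphism of $\mathsf{H}_x^\circ$ that fixes $\mathsf{T}$ pointwise, and that this descent is compatible with the Frobenius. Once these identifications are in place, the stabilization of $\mathsf{X}$ follows from the two-line calculation above together with the fact that torus-conjugation preserves its associated root subgroups.
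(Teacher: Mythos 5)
Your argument is correct and establishes the statement, but it reaches the key intermediate step by a different route from the paper. The observation that ${\rm Fr}$ commutes with ${\rm Int}(t)$ because $t$ is $F$-rational is shared with the paper's proof. For showing that conjugation by $t$ stabilizes $\mathsf{U}$, however, you decompose $\mathsf{U}$ into root subgroups $\mathsf{U}_a$ relative to $\mathsf{T}$: since $t\in\bT$ and $\bT$ is abelian, ${\rm Int}(t)$ preserves each root group $\bU_a$ of $(\bH,\bT)$, and since $t$ fixes $x$ it normalizes $\bH(F^{\rm un})_{x,0}$ and $\bH(F^{\rm un})_{x,0+}$, so the induced automorphism of $\mathsf{H}_x^\circ$ preserves each $\mathsf{U}_a$ and hence $\mathsf{U}$. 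The paper instead works with the preimage $\widetilde{\mathsf{U}}$ of $\mathsf{U}$ in $\bH(F^{\rm un})_{x,0}$ and argues directly that $\widetilde{\mathsf{U}}$ is normalized by $T$: it is normalized by $\bT(F^{\rm un})_0$ and (trivially) by the center of $\bH$, and ellipticity of $\bT$ is then invoked to place $T$ inside what these two pieces generate. Both routes are valid; yours is more explicit about the root-group combinatorics, while the paper's is shorter but leans on a generation step that is easy to miss on first reading. One small point worth sharpening in your write-up: the assertion that scaling by $a(t)$ preserves each affine root subgroup at $x$ needs $a(t)$ to be a unit for every root $a$; this is automatic once you note that $t$ normalizes the parahoric, but it is worth flagging, since the reason it holds is precisely that $T\subset H_x$ --- equivalently, that $\bT$ is elliptic in $\bH$. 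So ellipticity is at work in your argument too, just more implicitly than in the paper's version.
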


\begin{proof}
Suppose $t\in T$ and suppose the coset $c= h\bH(F^{\rm un})_{x,0+}$ lies in $\mathsf{X}$.
Let $\widetilde{\mathsf{U}}$ be the preimage of $\mathsf{U}$ in $\bH(F^{\rm un})_{x,0}$.
  To say that $c^{-1}{\rm Fr}(c)$ lies in $\mathsf{U}$ means that the corresponding coset is a subset of $\widetilde{\mathsf{U}}$.

We observe that $tht^{-1}\in \bH(F^{\rm un})_{x,0}$ and 
$$t(h\bH(F^{\rm un})_{x,0+})t^{-1}=
tht^{-1}\bH(F^{\rm un})_{x,0+}\in \mathsf{H}_x^\circ.$$
So we have a well-defined element $tct^{-1} = tht^{-1}\bH(F^{\rm un})_{x,0+}$ of $\mathsf{H}_x^\circ$.

We need to show that $(tct^{-1})^{-1} {\rm Fr}(tct^{-1})$ lies in $\mathsf{U}$.  In other words, from the coset point of view, we need that the {\it set}
$(tct^{-1})^{-1} {\rm Fr}(tct^{-1})$ is a subset of the preimage of $\mathsf{U}$ in $\bH(F^{\rm un})_{x,0}$.

Using the natural identification 
$${\rm Gal}(F^{\rm un}/F)
= {\rm Gal}({\mathfrak{f}_{F^{\rm un}}}/
\mathfrak{f}_F),
$$
we see that $$(tct^{-1})^{-1} {\rm Fr}(tct^{-1})= tc^{-1}{\rm Fr}(c)t^{-1} .$$
On the right hand side, $c^{-1}{\rm Fr}(c)$ is viewed as a coset.  Since $c\in \mathsf{X}$, we have $c^{-1}{\rm Fr}(c)\subset \widetilde{\mathsf{U}}$.
But $\widetilde{\mathsf{U}}$ must be normalized by $T$, since it is normalized by $\bT (F_{\rm un})_0$ and the center of $\bH$, and since $\bT$ is $F$-elliptic.  Therefore,
$tc^{-1}{\rm Fr}(c)t^{-1}\subset \widetilde{\mathsf{U}}$ and thus $tct^{-1}\in \mathsf{X}$.
\end{proof}

So the third action on $\mathsf{X}$ is the action of $T$ by conjugation.  This yields an action of $T$ on $\mathsf{V}_{\bar\mu_{-}}$.  When $t\in T$ and $v\in \mathsf{V}_{\bar\mu_{-}}$, we use the notation ${\rm Int}(t)v$ to denote this action.
The definition of $\rho_T^{\mu_{-}}$ on $T$ is given by
$$\rho_T^{\mu_{-}} (t) = \mu_{-}(t)\ {\rm Int}(t).$$
It follows that the restriction of $\rho_T^\mu$ on $T$ is defined by
$$\rho_T^{\mu} (t) = \mu(t)\ {\rm Int}(t).$$

\subsection{From $TH_{x,0}$ to $H_x$}

As discussed above, the supercuspidal representations  in \cite{KalYu} are constructed from certain representations of $H_x$ called ``permissible representations''  \cite[Definition 2.1.1]{ANewYu}.
For regular supercuspidal representations, the relevant permissible representation is defined by
$$\rho = {\rm ind}_{TH_{x,0}}^{H_x}(\rho_T^\mu).$$

\begin{lemma} $\rho$ is a permissible representation.\end{lemma}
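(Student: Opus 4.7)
The plan is to unpack Definition 2.1.1 of \cite{ANewYu} into three conditions---irreducibility of $\rho$ as a smooth representation of $H_x$, $\phi$-isotypy of $\rho | H_{x,0+}$ for some character $\phi$ of $H_{x,0+}$, and a genericity condition on $\phi$ along the tower $\vec\bG$ at the depths $\vec r$---and to verify each in turn.  Throughout, I would exploit the weak factorization $\mu = \mu_-(\mu_+|T)$ so as to reduce to the depth zero case plus a harmless one-dimensional twist by $\mu_+$.

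For irreducibility, I would adapt the proof of Lemma 3.4.12 of \cite{KalYu}, which handles the depth zero case.  Mackey's irreducibility criterion amounts to the claim that no $h \in H_x \setminus TH_{x,0}$ intertwines $\rho_T^\mu$ with itself on $TH_{x,0} \cap h\,TH_{x,0}\,h^{-1}$.  Because $\mu_+|TH_{x,0}$ is one-dimensional and $\mu_+$ is a character of $H\supset H_x$, the tensor factor contributes the same scalar on both sides and cannot affect the intertwining question.  Thus the claim reduces to the analogous statement for $\rho_T^{\mu_-}$, which is precisely Lemma 3.4.12 of \cite{KalYu}; the key input it requires, the general position of $\bar\mu_-$ on $\mathsf{T}(\mathfrak{f}_F)$, follows from Condition~(3) of Definition~\ref{terp} via Fact 3.4.11 of \cite{KalYu}.

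For the isotypy of $\rho | H_{x,0+}$, I observe that $H_x$ stabilizes $x$ and thus normalizes $H_{x,0+}$.  Mackey's restriction formula then yields
\[ \rho | H_{x,0+} = \bigoplus_{g \in H_x / TH_{x,0}} (\rho_T^\mu)^g | H_{x,0+}. \]
The depth zero factor $\rho_T^{\mu_-}$ is pulled back through the quotient $H_{x,0}\to H_{x,0:0+}$ and so is trivial on $H_{x,0+}$, whence $\rho_T^\mu | H_{x,0+} = (\mu_+|H_{x,0+})\cdot{\rm Id}$.  Since a one-dimensional character of $H$ is trivial on commutators, every conjugate by $g \in H_x$ agrees with the original, so $\rho | H_{x,0+} = \phi \cdot {\rm Id}$ with $\phi = \mu_+|H_{x,0+}$.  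Independence of $\phi$ from the choice of weak factorization is exactly the Corollary to Lemma~\ref{mushlemma}.

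The main obstacle will be verifying the genericity condition built into Definition 2.1.1 of \cite{ANewYu}---that each restriction $\phi | Z^{i,i+1}_{r_i}$ has a $\bG^{i+1}$-generic dual coset at depth $r_i$, in the sense needed later to construct the Weil representations $\omega_i$ of \S\ref{sec:second}.  The strategy is to transfer this from Kaletha's Howe factorization theory: Proposition 3.7.4 of \cite{KalYu} produces a Howe factorization $\mu_{-1},\ldots,\mu_d$ of $\mu$, and setting $\mu_+ = \prod_{i \ge 0} \mu_i|H$ realizes one choice of weak factorization as in Definition~\ref{weakfact}.  The genericity clauses of Definition 3.7.1 of \cite{KalYu} provide exactly the required genericity for each $\mu_i | Z^{i,i+1}_{r_i}$, while a depth comparison across the tower $\vec\bG$---using that $\mu_j$ for $j\ne i$ factors through a Levi subgroup of different rank and hence has restriction to $Z^{i,i+1}_{r_i}$ of depth strictly smaller than $r_i$---shows that the remaining factors contribute nothing to the dual coset at depth $r_i$.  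Consequently the dual coset of $\phi | Z^{i,i+1}_{r_i}$ coincides with that of $\mu_i | Z^{i,i+1}_{r_i}$, giving the required genericity and completing the verification that $\rho$ is permissible.
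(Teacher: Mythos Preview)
Your reductions for the first two conditions match the paper's: untwist by $\mu_+$ and invoke Lemma 3.4.12 of \cite{KalYu} for irreducibility; observe that $\rho_T^{\mu_-}$ is trivial on $H_{x,0+}$ so that $\phi = \mu_+|H_{x,0+} = \mu^\sharp|H_{x,0+}$.  One small correction: what Definition 2.1.1 of \cite{ANewYu} actually demands is irreducibility of $\pi_0 = {\rm ind}_{H_x}^H(\rho)$ on all of $H$, not just of $\rho$ on $H_x$; the paper checks this via the same $\mu_+$-twist, and Lemma 3.4.12 of \cite{KalYu} supplies it.

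Where you genuinely diverge is in the remaining conditions.  The paper treats two further items that you have compressed into a single ``genericity'' clause.  First, $\phi$ must be trivial on the subgroup $H_{{\rm der},x,0+}^\flat$ of \cite{ANewYu}; this is not quite automatic from ``$\mu_+$ kills commutators in $H$,'' and the paper invokes the hypothesis on $p$ together with Lemma 3.2.1 of \cite{ANewYu}.  Second, for Condition (4) the paper does not touch Howe factorizations at all: it simply cites \cite[Lemma 8.1]{MR1824988}, which guarantees the needed genericity once $p$ is not a bad prime.  Your route---matching the dual coset of $\phi|Z^{i,i+1}_{r_i}$ with that of $\mu_i$ by a depth comparison across the factorization---can be made to work, but as written it is loose: ``factors through a Levi subgroup of different rank'' is not the operative mechanism, and for $j>i$ the real point is that $Z^{i,i+1}_{r_i}$ sits inside the positive-depth part of $\bG^j_{\rm der}$, where any character of $G^j$ must vanish under the $p$-hypothesis.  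The paper's one-line appeal to Yu's lemma bypasses all of this bookkeeping.
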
\label{permiss}

\begin{proof}
Our first task is to show that the representation $$\pi_{0} = {\rm ind}_{TH_{x,0}}^H (\rho_T^\mu) = {\rm ind}_{H_x}^H(\rho)$$
is irreducible.

Let $(\mu_-,\mu_+)$ be a weak factorization of $\mu$.
Then $$\pi_{0}\simeq  {\rm ind}_{TH_{x,0}}^H (\rho_T^{\mu_{-}}) \otimes \mu_+ .$$
Thus  irreducibility of $\pi_{0}$  is equivalent to the irreducibility of the representation ${\rm ind}_{TH_{x,0}}^H (\rho_T^{\mu_{-}})$, which follows from 
 Lemma 3.4.12 \cite{KalYu}.  (Kaletha's lemma is a  consequence of Proposition 6.6 \cite{MR1371680} and a straightforward modification of the proof of Lemma 4.5.1 of \cite{MR2660683}.)
 
The next step is to show that the restriction of $\rho$ to $H_{x,0+}$ is a multiple of some character $\phi$.  But it is elementary to verify this with $\phi = \mu^\sharp |H_{x,0+}$, where $\mu^\sharp$ is the character of $TH_{x,0+}$ defined in Definition \ref{musharp}.

The fact that $\phi$ is trivial on the group denoted as $H_{{\rm der},x,0+}^\flat$ in \cite{ANewYu} follows from our assumption on $p$ and Lemma 3.2.1 \cite{ANewYu}.

Finally, since we assume $p$ is not a bad prime, \cite[Lemma 8.1]{MR1824988} implies Condition (4) in \cite[Definition 2.1.1]{ANewYu} is satisfied.
\end{proof}

\section{The character of $\rho_T^\mu$}\label{sec:rhoTmu}

\subsection{Topological Jordan decompositions for compact-mod-center groups}\label{sec:topJoe}


 In Proposition \ref{DLformula}, we give a formula for the character of $\rho_T^\mu$ that extends 
the Deligne-Lusztig (virtual) character formula \cite[Theorem 4.2]{MR0393266}.
The Deligne-Lusztig formula expresses the value of a virtual character at a given point in terms of that element's Jordan-Chevalley decomposition.  Similarly, our formula espresses the character of $\rho_T^\mu$ at an element of $TH_{x,0}$ in terms of a variant of the topological Jordan decomposition for compact-mod-center groups.  We develop the theory of such decompositions in this section.  The essential elements of this theory can be found in  \cite{MR2408311}.

Let $\bG$ be a connected reductive group that is defined over a field $F$ that is a finite extension of a field $\Q_p$ of $p$-adic numbers.
We would like to extend the theory of topological Jordan decompositions  to a slightly broader class of $F$-rational elements $g\in G = \bG (F)$.

Our objective is not to be as general as possible, but rather to give the most precise theory that fits the applications we have in mind.  We refer to Spice's article \cite{MR2408311} for a  general discussion of the theory of topological Jordan decompositions, which builds on the earlier works \cite[p.~226]{MR748509}, \cite[p.~213]{MR1000107}, and \cite[pp.~112-113]{MR1216184}.

Consider the closure $K_g$ of the subgroup of $G$ generated by $g$.  We say that $g$ is {\bf compact} if $K_g$ is compact.
Compact elements are precisely the elements that admit (standard) topological Jordan decompositions.
A particularly attractive way to define the topological Jordan decomposition is as follows.  Suppose $g$ is compact.  Then $K_g$ is a profinite abelian group and thus has a  direct product decomposition
$$K_g = \prod_\ell K_{g,\ell},$$ where for each prime $\ell$ we let $K_{g,\ell}$ denote the (unique) pro-$\ell$ Sylow subgroup of $K_g$.
The product
$$K_{g,p'} = \prod_{\ell\ne p} K_{g,\ell}$$ is known to be finite.  (See \cite[Remark 1.9]{MR2408311}.)  (Recall that $p$ is the characteristic of the residue field of $F$.) 
Let $u$ denote the $K_{g,p}$-component of $g$ and let $s = gu^{-1}$ be the $K_{g,p'}$-component.   Then $g= su=us$ is the {\bf topological Jordan decomposition} of $g$.

The elements that arise as $u$'s are said to be {\bf topologically unipotent} and the elements that arise as $s$'s are called {\bf absolutely semisimple}.

Topological unipotent elements are connected with unipotent elements in  the reductive quotients associated to points in the Bruhat-Tits buildings associated to $\bG$. According to Lemma 2.30 \cite{MR2408311}, $g$ is topologically unipotent precisely when there exists a  finite extension $E$ of $F$ and a point $y$ in the extended Bruhat-Tits building $\mathscr{B} (\bG , E)$ such that the prounipotent radical $\bG (E)_{y,0+}$ of the parahoric subgroup $\bG (E)_{y,0}$ contains $g$.  
If $x$ is another point in $\mathscr{B} (\bG , E)$ then 
the image of $\bG(E)_{x,0}\cap \bG (E)_{y,0+}$ in the reductive quotient
$$\mathsf{G}_x^\circ (\mathfrak{f}_E) = \bG(E)_{x,0:0+}$$ is the group $\mathsf{U}(\mathfrak{f}_E)$ of $\mathfrak{f}_E$-rational points of the unipotent radical $\mathsf{U}$ of a parabolic subgroup $\mathsf{P}$ of $\mathsf{G}_x^\circ$.  In particular, $\mathsf{P}$ is  the parabolic subgroup such that $\mathsf{P}(\mathfrak{f}_E)$ is the image of $\bG(E)_{x,0}\cap \bG (E)_{y,0}$ in  
$\mathsf{G}_x^\circ (\mathfrak{f}_E)$.

Every maximal compact subgroup of $G$ is the stabilizer ${\rm stab}_G(x)$ of some point $x$ in $\mathscr{B}(\bG, F)$.  If $g$ is compact then $K_g$ must therefore be contained in some ${\rm stab}_G(x)$.  On the other hand, given $x\in \mathscr{B}(\bG,F)$, whether or not ${\rm stab}_G(x)$ is maximal compact, it is compact and one can consider the topological Jordan decomposition of its elements.
Spice shows that if $g\in {\rm stab}_G (x)$ then the topological Jordan decomposition  of $g$ projects to Jordan-Chevalley decomposition in the reductive quotient
$\mathsf{G}^\circ_x (\mathfrak{f}_F) = G_{x,0:0+}$.  (See Lemma 2.30 and Theorem 2.38 in \cite{MR2408311}.)

Given a point $x$ in the extended building $\mathscr{B}(\bG,F)$, the associated parahoric subgroup and its prounipotent radical only depend on the image of $x$ in the reduced building $\mathscr{B}_{\rm red}(\bG,F)$.  So in our notations related to parahoric subgroups, such as ``$G_{x,0}$,'' we allow the index $x$ to represent either a point in the extended or reduced building.  Given $x\in \mathscr{B}_{\rm red}(\bG, F)$, we let $G_x$ denote the stabilizer of $x$ in $G$.  This is a compact-mod-center subgroup.

We now develop topological Jordan decompositions for elements $g$ of $G_x$.

The first step is to take topological Jordan decompositions modulo the split component of the center of $G$, as in \cite{MR2408311}.  Let $\bA_\bG$ be the largest $F$-split torus in the center of $\bG$ and let $\overline\bG= \bG/\bA_\bG$.  Since $\bA_\bG$ is split, $H^1(F,\bA_\bG)$ is trivial and thus 
$$\overline G  = G/A_G,$$ where $\overline G = \overline{\bG}(F)$ and $A_G = \bA_\bG(F)$. 
The reduced buildings of $\bG$ and $\overline{\bG}$ are naturally identified, and  when $x\in \mathscr{B}_{\rm red}(\bG,F)$  we have $$\overline G_x = G_x/A_G .$$
Note that the latter group is compact, and hence all of its elements have topological Jordan decompositions.
Given $g\in \bG$, let $\bar g$ denote the image of $g$ in $\overline{\bG}$.
Let $\bar g = s_{\bar g}u_{\bar g}$ denote the topological Jordan decomposition  in $\overline G_x$ of $\bar g$.

\begin{definition} Given $x\in \mathscr{B}_{\rm red}(\bG,F)$ and $g\in G_x$, a  {\bf topological Jordan decomposition} of $g$ is a decomposition $g= su=us$ such that $\bar s$ is absolutely semisimple and $u$ is topologically unipotent.
\end{definition}

We observe that the group $(A_G)_{0+}$ on the set of topological Jordan decompositions of the latter type according to $$a\cdot (s,u) = (as,a^{-1}u).$$

The next result is the main result of this section and it is an immediate consequence of the two lemmas that follow it.  The lemmas also provide a bit of extra detail.

\begin{proposition}
Given $x\in \mathscr{B}_{\rm red}(\bG,F)$, every element of $G_x$ admits a topological Jordan decomposition.  Such decompositions are unique modulo the action of $(A_G)_{0+}$.
\end{proposition}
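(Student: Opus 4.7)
The plan is to deduce the proposition from two companion lemmas, to be established next: one proving existence of a topological Jordan decomposition, and one proving uniqueness modulo the action of $(A_G)_{0+}$.  Both will reduce the statement to the classical topological Jordan decomposition in compact $p$-adic groups developed in \cite{MR2408311}, applied to the compact quotient $\overline{G}_x = G_x/A_G$ sitting in the central extension
$$
1 \to A_G \to G_x \to \overline{G}_x \to 1.
$$

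For existence, given $g \in G_x$, I will first project to $\overline{G}_x$ and take the classical topological Jordan decomposition $\bar g = \bar s_0\, \bar u_0$ there.  The main step is to lift this back to $G_x$: I plan to produce an element $a \in A_G$ and a compact subgroup $K \subseteq G_x$ with $g a \in K$, then invoke the classical theorem inside $K$ to write $g a = \tilde s\, \tilde u$, and finally set $s := \tilde s\, a^{-1}$ and $u := \tilde u$.  Centrality of $A_G$ will guarantee that $s$ and $u$ commute, that $\bar s = \bar{\tilde s}$ agrees with $\bar s_0$ and is therefore absolutely semisimple in $\overline{G}_x$, and that $u = \tilde u$ remains topologically unipotent in $G$ (both conditions being detected by $u^{p^n} \to 1$).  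Constructing the pair $(a, K)$ uses the Bruhat--Tits structure of $G_x$: the parahoric $G_{x,0}$ is a compact open subgroup with $(A_G)_0 = A_G \cap G_{x,0}$ and with $G_x/(A_G \cdot G_{x,0})$ finite, so the non-compact part of $g$ can be absorbed into a central translation $a \in A_G$, leaving a compact residue inside an enlargement of $G_{x,0}$ by finitely many coset representatives.

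For uniqueness, suppose $g = s_1 u_1 = s_2 u_2$ are two topological Jordan decompositions.  Projecting to $\overline{G}_x$ and using uniqueness of the classical decomposition in the compact setting forces $\bar s_1 = \bar s_2$ and $\bar u_1 = \bar u_2$, whence $s_2 = s_1 a$ and $u_2 = a^{-1} u_1$ for a unique $a \in A_G$.  Because $A_G$ is central, $u_1$ and $u_2$ commute, and $a = u_1 u_2^{-1}$ satisfies
$$
a^{p^n} = u_1^{p^n}\, u_2^{-p^n} \longrightarrow 1.
$$
In the $F$-split torus $A_G = \bA_\bG(F) \cong (F^\times)^{\dim \bA_\bG}$, the elements satisfying $a^{p^n} \to 1$ are exactly the principal units in each factor, i.e.\ the elements of $(A_G)_{0+}$; Teichm\"uller representatives and uniformizer powers are excluded because they remain nontrivial modulo $(A_G)_{0+}$.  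This gives the claimed uniqueness.

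The hard part will be the existence step, specifically producing the pair $(a, K)$: one cannot apply the classical theory directly to $g$ because $G_x$ is generally non-compact, so the argument must first peel off a central translation $a \in A_G$ in order to land inside a compact subgroup where the classical theorem applies.  Once this reduction is in place, every compatibility check required for the decomposition (commutativity of $s$ and $u$, absolute semisimplicity of $\bar s$, topological unipotence of $u$) follows immediately from the centrality of $A_G$ and from the characterization of topological unipotence by $u^{p^n} \to 1$.
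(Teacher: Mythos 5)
Your uniqueness argument is correct and runs along the same lines as the paper's: two decompositions of $g$ project to the same decomposition of $\bar g$, so they differ by a central $a \in A_G$, and $a = u_1 u_2^{-1}$ is then topologically unipotent in the split torus $A_G$, hence lies in $(A_G)_{0+}$.

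Your existence argument, however, has a genuine gap at exactly the place you flag as ``the hard part'': the pair $(a,K)$ you need does not always exist. You want $a\in A_G$ with $ga$ compact, but this requires $g$ to lie in $A_G\cdot(\text{compact part of }G_x)$, which can fail. Take $\bG = \bT = R_{E/F}\mathbb{G}_m$ for $E/F$ quadratic \emph{ramified} (tame when $p\neq 2$), so $G_x = T = E^\times$, $A_G = F^\times$, and $\overline{G}_x = E^\times/F^\times$ is compact. For $g=\varpi_E$ a uniformizer of $E$, the coset $\varpi_E F^\times$ consists entirely of elements of odd $E$-valuation; none of them is compact, so no choice of $a$ lands $ga$ in a compact subgroup. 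The fact that $G_x/(A_G\, G_{x,0})$ is finite does not let you absorb the noncompactness into $A_G$: the obstruction lives in the cokernel of the valuation map $A_G \to$ (value group of $G_x$), which need not vanish. Note also that this is precisely why the proposition's definition requires only $\bar s$, not $s$ itself, to be absolutely semisimple; in the example, $s = gu^{-1}$ has odd valuation and is not a compact element of $E^\times$, so there is no hope of producing it as $\tilde s a^{-1}$ with $\tilde s$ compact and absolutely semisimple in $G$.

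The paper sidesteps this by never trying to make $g$ compact. Instead it lifts only the topologically unipotent part: using surjectivity of $\bG(E')_{y,0+} \to \overline{\bG}(E')_{y,0+}$ (a Bruhat--Tits/Moy--Prasad fact, \cite[Lemma 3.3.2]{KalYu}), one finds a topologically unipotent $u\in G_x$ with $\bar u = u_{\bar g}$, and then \emph{defines} $s := g u^{-1}$. Commutativity of $s$ and $u$ is handled separately by observing that both lie in the abelian group $K_g A_G$, where $K_g$ is the closure of $\langle g\rangle$. The point you treat as a routine compatibility check (``topological unipotence of $u$ follows immediately from centrality of $A_G$'') is actually where the real work lies: one has to know a topologically unipotent lift exists at all, and that is the content of the surjectivity lemma, not a consequence of centrality. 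Your approach would work only under the extra hypothesis $G_x = A_G \cdot (\text{compact})$, which holds for $\mathrm{GL}_n$ and for split groups but not in general.
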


The latter result extends Spice's main result on topological Jordan decompositions \cite[Theorem 2.38]{MR2408311} and it is compatible with Kaletha's topological Jordan decompositions of strongly regular elements  \cite[Lemma 3.4.13]{KalYu}.

\begin{lemma}
Suppose $x\in \mathscr{B}_{\rm red}(\bG,F)$ and $g\in G_x$.    If $s$ and $u$ are arbitrary preimages of $s_{\bar g}$ and $u_{\bar g}$, respectively, in $G_x$ then $s$ and $u$ lie in the abelian group $K_gA_G$ and hence they must commute.
\end{lemma}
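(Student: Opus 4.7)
The plan is to establish that $K_gA_G$ is an abelian subgroup of $G_x$ and that every preimage in $G_x$ of $s_{\bar g}$ or $u_{\bar g}$ lies in it. Commutation of $s$ and $u$ then follows at once.

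First I would observe that $K_g$ is abelian, being the closure of the cyclic subgroup $\langle g\rangle$ of $G$; and since $A_G$ lies in the center of $G$, it commutes with every element of $K_g$. Therefore $K_gA_G$ is an abelian subgroup of $G_x$, and once both inclusions $s\in K_gA_G$ and $u\in K_gA_G$ are established, the commutation $su=us$ is automatic.

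Next, writing $\pi: G\to\overline{G}$ for the quotient by $A_G$, I would note that $K_gA_G = \pi^{-1}(\pi(K_g))$, because $A_G=\ker\pi$. Since the preimages in $G_x$ of a given element of $\overline{G}_x$ form a single $A_G$-coset, the condition $s\in K_gA_G$ is equivalent to $s_{\bar g}\in \pi(K_g)$, and similarly for $u$. So the lemma reduces to the assertion $s_{\bar g},\,u_{\bar g}\in\pi(K_g)$.

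The heart of the argument is the identification $\pi(K_g)=K_{\bar g}$. The inclusion $\pi(K_g)\subseteq K_{\bar g}$ is immediate from continuity: $\pi$ sends the closure $\overline{\langle g\rangle}=K_g$ of $\langle g\rangle$ into the closure of $\pi(\langle g\rangle)=\langle\bar g\rangle$, which is $K_{\bar g}$. For the reverse inclusion I would show that $\pi(K_g)$ is closed in the compact group $\overline{G}_x$; since $\pi(K_g)$ already contains $\langle\bar g\rangle$, a dense subset of $K_{\bar g}$, its closedness forces $\pi(K_g)\supseteq K_{\bar g}$. Once $\pi(K_g)=K_{\bar g}$ is known, the containments $s_{\bar g},u_{\bar g}\in K_{\bar g}$ (which hold by the very definition of the topological Jordan decomposition in $\overline{G}_x$) supply the required preimages in $K_g$, and the proof concludes.

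The main obstacle is the closedness of $\pi(K_g)$, and I expect this to rest on the structure of $K_g$ as a totally disconnected locally compact monothetic abelian group, combined with the compactness of $\overline{G}_x$. When $g$ itself is compact, $K_g$ is compact and $\pi(K_g)$ is closed as the continuous image of a compact set, which handles the case covered by Spice's classical theorem. For general $g\in G_x$, the extra ingredient is that $\bar g\in\overline{G}_x$ lies in a compact group, which constrains $K_g\cap A_G$ sufficiently to make $\pi(K_g)$ a closed subgroup of $\overline{G}_x$; this is the step whose verification will require the most care.
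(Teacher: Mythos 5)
Your strategy tracks the paper's own proof, which simply asserts that the natural map $K_g \to K_{\bar g}$ is a surjection and then lifts $s_{\bar g}$ and $u_{\bar g}$ to $K_g$. You were right to single out the surjectivity (equivalently, the closedness of $\pi(K_g)$ in $\overline{G}_x$) as the delicate point — but in fact it \emph{fails} in general, so the step you flagged as needing ``the most care'' is a genuine, unfillable gap, one the paper shares. The obstruction is exactly the non-compact case you worried about: when $g$ is not compact, $K_g$ can be discrete. For instance, take $\bG$ to be the product of a one-dimensional split torus with an anisotropic torus $\bT_0$, say the norm-one torus of an unramified quadratic extension of $F$, so that $A_G = F^\times$ and $\overline{G} = T_0$ is a compact abelian group with nontrivial pro-$p$ and prime-to-$p$ parts. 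With $\varpi$ a uniformizer, $b$ a topological generator of $T_0$, and $g = (\varpi, b) \in G = G_x$, the group $K_g = \langle g\rangle \cong \Z$ is discrete, because the powers of $\varpi$ form a discrete closed subset of $F^\times$. Thus $\pi(K_g) = \langle b\rangle$ is a proper, countable, dense subgroup of $K_{\bar g} = T_0$, and the components $s_{\bar g}$ and $u_{\bar g}$ of $b$ do not lie in $\langle b\rangle$. So the containment $s, u \in K_g A_G$ is simply false here, and your reduction to $\pi(K_g) = K_{\bar g}$ cannot be completed.

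The conclusion one actually needs downstream — that $s$ and $u$ commute — is nonetheless true, by an argument that bypasses the false containment. Since $K_{\bar g}$ is abelian, the commutator of any two elements of $\pi^{-1}(K_{\bar g})$ lies in $A_G$; since $A_G$ is central in $G$, this commutator depends only on the images in $K_{\bar g}$. One thus obtains a bi-multiplicative pairing $c : K_{\bar g} \times K_{\bar g} \to A_G$ with $c(\bar x, \bar y) = [x,y]$ for any preimages, and $c$ is continuous because it is induced from the continuous commutator map on $\pi^{-1}(K_{\bar g})^2$ via the open projection onto $K_{\bar g}^2$. Any two powers of $g$ commute, so $c$ vanishes on $\langle\bar g\rangle \times \langle\bar g\rangle$, a dense subgroup; by continuity $c$ vanishes identically, and in particular $c(s_{\bar g}, u_{\bar g}) = 1$, i.e., any preimages $s$ of $s_{\bar g}$ and $u$ of $u_{\bar g}$ commute. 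This salvages what the lemma is used for, and the Proposition that follows, even though the assertion $s, u \in K_g A_G$ itself must be dropped.
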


\begin{proof}
The natural homomorphism $K_g\to K_{\bar g}$ of abelian groups is a surjection with kernel $A_G\cap K_g$.  Choose preimages $s_0$ and $u_0$ of $s_{\bar g}$ and $u_{\bar g}$  in $K_g$.  Since $K_g$ is abelian, $s_0$ and $u_0$ must commute.  If $s$ and $u$ are preimages of $s_{\bar g}$ and $u_{\bar g}$ in $G_x$ then there exist elements $a,a'\in A_G$ such that $s=as_0$ and $u= a's_0$ or, in other words, $s$ and $u$ lie in $K_gA_G$.\end{proof}

\begin{lemma}
The image in $\overline G$ of the set of topological unipotent elements in $G$ is precisely the set of topological unipotent elements in $G$.  
Consequently, if $x\in \mathscr{B}_{\rm red}(\bG,F)$ then every topologically unipotent element in $\overline G_x$ is the image of a topologically unipotent element in $G_x$.  Given $g\in G_x$, the set of topologically unipotent preimages of $u_{\bar g}$ in $G_x$ comprises a single coset in $(K_gA_G)/(A_G)_{0+}$.
\end{lemma}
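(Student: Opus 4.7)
The plan is to split the lemma into its three assertions and address them in order of increasing subtlety. The coset statement at the end reduces to a short computation inside $A_G$, so I would handle it last. The first sentence (that the image in $\overline G$ of the topologically unipotent elements of $G$ is exactly the set of topologically unipotent elements of $\overline G$) has a trivial forward direction; its reverse direction follows from the second sentence once one observes that every topologically unipotent element of $\overline G$ lies in some $\overline G_x$. So the crux of the proof is the second sentence, namely the existence of a topologically unipotent lift in $G_x$.

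For the trivial forward direction I would invoke Lemma 2.30 of \cite{MR2408311}: if $u\in \bG(E)_{y,0+}$ for some finite $E/F$ and $y\in \mathscr{B}(\bG,E)$, then because the quotient map $\bG\to\overline\bG$ by the split central torus $\bA_\bG$ sends parahoric subgroups and their pro-unipotent radicals to the analogous objects for $\overline\bG$, the image of $u$ lies in $\overline\bG(E)_{y,0+}$ and is thus topologically unipotent.

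For the existence of a topologically unipotent lift in $G_x$, I would begin by applying Theorem 2.38 of \cite{MR2408311} to the compact group $\overline G_x$: a topologically unipotent $\bar v\in\overline G_x$ projects to a unipotent element of the reductive quotient $\overline{\mathsf{G}}_x^\circ(\mathfrak{f}_F)=\overline G_{x,0:0+}$, which places $\bar v$ in $\overline G_{x,0+}=\overline\bG(F)_{x,0+}$. I would then lift $\bar v$ through the surjection $G_{x,0+}\to \overline G_{x,0+}$ to an element $u\in G_{x,0+}\subseteq G_x$; Lemma 2.30 of \cite{MR2408311} then certifies that $u$ is topologically unipotent in $G$.

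For the coset statement, suppose $u,u'\in G_x$ are topologically unipotent preimages of $u_{\bar g}$ and write $u'=ua$ with $a\in A_G$. Centrality of $a$ forces $u$ and $u'$ to commute, so $a^{p^n}=u^{-p^n}(u')^{p^n}\to 1$ because both $u^{p^n}$ and $(u')^{p^n}$ tend to $1$ by topological unipotence. Since $A_G$ is the group of $F$-points of an $F$-split torus, the elements $a\in A_G$ satisfying $a^{p^n}\to 1$ are exactly those in $(A_G)_{0+}$. Combined with the preceding lemma, which places every preimage of $u_{\bar g}$ in $K_gA_G$, this identifies the set of topologically unipotent preimages of $u_{\bar g}$ with a single coset of $(A_G)_{0+}$ in $K_gA_G$.

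The main obstacle I expect is establishing the surjectivity $G_{x,0+}\to\overline G_{x,0+}$ used above. The compatibility of the Moy-Prasad filtration with quotients by a split central torus is standard, but it should be cited carefully: it rests on $\bA_\bG$ being $F$-split (so no Galois descent obstruction appears when lifting through the quotient) and on the identification $\mathscr{B}_{\rm red}(\bG,F)=\mathscr{B}_{\rm red}(\overline\bG,F)$ used to locate $x$ on both sides.
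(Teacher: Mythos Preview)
Your argument has a genuine gap in the key second step. You claim that a topologically unipotent $\bar v\in\overline G_x$ ``projects to a unipotent element of the reductive quotient $\overline{\mathsf{G}}_x^\circ(\mathfrak{f}_F)$, which places $\bar v$ in $\overline G_{x,0+}$.'' But projecting to a \emph{unipotent} element of the reductive quotient does not force $\bar v$ to project to the \emph{identity}; the pro-unipotent radical $\overline G_{x,0+}$ is precisely the preimage of the identity, not of the unipotent set. For a concrete counterexample, take $\overline\bG=\mathrm{PGL}_2$ and let $x$ be a vertex: any lift to $\overline G_{x,0}$ of a nontrivial unipotent element of $\mathrm{PGL}_2(\mathfrak{f}_F)$ is topologically unipotent (its $p$-th powers tend to $1$) yet lies outside $\overline G_{x,0+}$. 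So you cannot confine yourself to the single point $x$ and the base field $F$.

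The paper's route avoids this by using the full force of Spice's Lemma~2.30 characterization: $\bar v$ topologically unipotent means $\bar v\in\overline\bG(E)_{y,0+}$ for \emph{some} finite extension $E/F$ and \emph{some} point $y\in\mathscr{B}_{\rm red}(\bG,E)$, not necessarily $y=x$ or $E=F$. One then invokes the surjectivity $\bG(E)_{y,0+}\twoheadrightarrow\overline\bG(E)_{y,0+}$ (Lemma~3.3.2 of \cite{KalYu}) --- which is exactly the ingredient you flagged at the end of your proposal --- to produce a lift $u\in\bG(E)_{y,0+}$. This $u$ is topologically unipotent by Lemma~2.30 again, and it automatically lies in $G_x$ because it maps to $\bar v\in\overline G_x=G_x/A_G$. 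Thus the surjectivity you identified is indeed the crux, but it must be applied at the point $y$ and over the extension $E$ supplied by the characterization of topological unipotence, not at $x$ over $F$. Your coset argument is fine and matches the paper's, modulo the trivial observation that conversely $ua$ remains topologically unipotent for $a\in(A_G)_{0+}$.
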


\begin{proof}
If $E$ is a finite extension of $F$ and $y$ is a point in $\mathscr{B}_{\rm red}(\bG,E)$ then the natural homomorphism $\bG(E)_{y,0+}\to \overline\bG(E)_{y,0+}$ is surjective.  (See \cite[Lemma 3.3.2]{KalYu}.)  Taking unions over all $y$, we see that the image of the set $\bG(E)_{0+}$ is the set $\overline\bG(E)_{0+}$.  The first two claims  follow.

Now suppose $g\in G_x$.  We know that since $u_{\bar g}$ is topologically unipotent it has topologically unipotent preimages in $G$.  But all such preimages must lie in $K_gA_G$.  If $u$ is one such preimage then any other such preimage must have the form $ua$, for some $a\in A_G$.  But when $a\in A_G$ then $ua$ is topologically unipotent precisely when $a\in (A_G)_{0+}$.  This completes the proof.\end{proof}

\subsection{A formula for the character of $\rho_T^\mu$}

In this section, we give a formula for the trace of $\rho_T^\mu(k)$, where $k$ is an element of $TH_{x,0}$ with topological Jordan decomposition $k = su$.  Explicitly, the formula
looks like
$${\rm trace} (\rho_T^\mu(k)) = \frac{(-1)^{\ell(w)}}{|\mathsf{M}_s(\mathfrak{f}_F)|}\sum_{h\in \mathsf{H}_x^\circ (\mathfrak{f}_F)}{}^h\dot\mu(s)\ \hat\mu (u)\ Q_{h\mathsf{T}h^{-1}}^{\mathsf{M}_s}(u),$$ and it may be viewed as an extension of the Deligne-Lusztig character formula \cite[Theorem 4.2]{MR0393266}.  In the depth zero case, it is a variant of  character formulas in \cite{KalYu}.

Before proving the formula, the first order of business is to explain the notations.
As usual, there is no loss in generality in assuming that $p$ does not divide the order of $\pi_1(\bG_{\rm der})$, and so we do so.

We start by explaining the meaning of 
${}^h\dot\mu (s)$.

In Definition \ref{musharp},  we have defined a canonical extension of the character $\mu$ of $T$ to a character 
$\mu^\sharp$ of $TH_{x,0+}$.
Let us extend $\mu^\sharp$ to a function $\dot\mu :TH_{x,0}\to\C$ by taking
$\dot\mu \equiv 0$ on the complement of $TH_{x,0+}$ in $TH_{x,0}$.
When $h\in H_{x,0}$,  let 
$${}^h\dot\mu(s) = \dot\mu (h^{-1}sh).$$
Since ${}^h\dot\mu = \dot\mu$ whenever $h\in H_{x,0+}$, there is a natural interpretation of ${}^h\dot\mu$ when $h\in \mathsf{H}_x^\circ (\mathfrak{f}_F) = H_{x,0:0+}$.

We also extend the character $\mu^\sharp$ in another way.  Let $\mathscr{U}$ be the set of topologically unipotent elements in $H_{x,0}$.  We can extend $\mu^\sharp$ to a function 
$$\hat\mu : T\mathscr{U}\to \C$$
by taking $$\hat\mu(tu) = \mu(t) \mu_+(u),$$ whenever $t\in T$ and $u\in\mathscr{U}$.
One shows that this is well-defined and independent of the weak factorization just as in the proof of Lemma \ref{mushlemma}.

Next, we explain the notation $\mathsf{M}_s$.
Roughly speaking, $\mathsf{M}_s$ is the connected centralizer in $\mathsf{H}_x^\circ$ of $s$.  A more precise description is given as follows.  The connected centralizer $Z_\bH^\circ (s)$ of $s$ in $\bH$ is a connected reductive $F$-group.  The group $$\bH (F^{\rm un})_{x,0}\cap Z_\bH^\circ (s)(F^{\rm un})$$ is a parahoric subgroup of $Z_\bH^\circ (s)(F^{\rm un})$ with pro-unipotent radical $$\bH (F^{\rm un})_{x,0+}\cap Z_\bH^\circ (s)(F^{\rm un}).$$
We take $\mathsf{M}_s$ to be the corresponding reductive quotient (that is, the quotient of the latter two groups), and it may be viewed as a (Levi) subgroup of $\mathsf{H}_x^\circ = \bH(F^{\rm un})_{x,0:0+}$.

Finally, we explain the notation $Q_{h\mathsf{T}h^{-1}}^{\mathsf{M}_s}(u)$.
In our putative character formula, the only relevant pairs $(s,h)$ are those for which ${}^h\dot\mu (s)$ is nonzero.  We claim that for such $(s,h)$, it must be the case that $h\mathsf{T}h^{-1}$ is a maximal $\mathfrak{f}_F$-torus in $\mathsf{M}_s$.
It suffices to show that if $\hat h$ is a preimage of $h$ in $H_{x,0}$ then $\hat h\bT(F^{\rm un})_0 \hat h^{-1}$ is contained in $\bH (F^{\rm un})_{x,0}\cap Z_\bH^\circ (s)(F^{\rm un})$.
There is no loss in generality in assuming that $h$ and $\hat h$ are trivial and  $s\in T$, in which case our claim is obvious.  It follows that we have a well-defined Green function $Q_{h\mathsf{T}h^{-1}}^{\mathsf{M}_s}$ on the unipotent set in $\mathsf{M}_s$.  (See Definition 4.1 \cite{MR0393266}.)
When we write $Q_{h\mathsf{T}h^{-1}}^{\mathsf{M}_s}(u)$ for a prounipotent element $u$, we really mean $Q_{h\mathsf{T}h^{-1}}^{\mathsf{M}_s}(\bar u)$, where $\bar u$ is the corresponding unipotent element in $\mathsf{M}_s(\mathfrak{f}_F)$.

Having established our notations, we can officially state our character formula:

\begin{proposition}\label{DLformula}  If $k\in TH_{x,0}$ has topological Jordan decomposition $k=su$ then
$${\rm trace} (\rho_T^\mu(k)) = \frac{(-1)^{\ell(w)}}{|\mathsf{M}_s(\mathfrak{f}_F)|}\sum_{h\in \mathsf{H}_x^\circ (\mathfrak{f}_F)}{}^h\dot\mu(s)\ \hat\mu (u)\ Q_{h\mathsf{T}h^{-1}}^{\mathsf{M}_s}(u).$$
\end{proposition}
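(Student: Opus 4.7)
The plan is to reduce the general formula to its depth-zero instance and then to extend the classical Deligne-Lusztig argument to accommodate the $T$-conjugation action on the variety.  First, fix a weak factorization $(\mu_-,\mu_+)$ of $\mu$ and use the defining identity $\rho_T^\mu = \rho_T^{\mu_-}\otimes (\mu_+\,|\,TH_{x,0})$ to obtain ${\rm trace}(\rho_T^\mu(k)) = \mu_+(k)\,{\rm trace}(\rho_T^{\mu_-}(k))$.  Distributing $\mu_+(k) = \mu_+(s)\mu_+(u)$ and applying the definitions of $\mu^\sharp$, $\hat\mu$, and $\dot\mu$, one checks two bookkeeping identities: $\mu_+(u)\cdot\hat\mu_-(u) = \hat\mu(u)$ (since $\mu_-$ has depth zero, $\hat\mu_-|\mathscr{U}\equiv 1$, while $\hat\mu(u)=\mu_+(u)$), and $\mu_+(s)\cdot{}^h\dot\mu_-(s) = {}^h\dot\mu(s)$ (using $\mu = \mu_-(\mu_+|T)$ on $T$ together with the definitions of $\mu^\sharp$ and $\mu_-^\sharp$).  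This reduces the proof to the corresponding formula for $\rho_T^{\mu_-}$.

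Next, I would set up the Lefschetz fixed point computation for the depth-zero case.  The space $\mathsf{V}_{\bar\mu_-} = H^{\ell(w)}_c(\mathsf{X},\overline{\Q}_\ell)_{\bar\mu_-}$ carries the left-translation action of $\mathsf{H}_x^\circ(\mathfrak{f}_F)$ pulled back via $H_{x,0}\twoheadrightarrow \mathsf{H}_x^\circ(\mathfrak{f}_F)$ and the conjugation action of $T$.  Writing $k = tk'$ with $t\in T$ and $k'\in H_{x,0}$, the endomorphism $\rho_T^{\mu_-}(k)$ is realized on $\mathsf{X}$ by the map $h\mapsto t\,\bar{k'}\,h\,t^{-1}$, scaled by $\mu_-(t)$.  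By He's theorem \cite{MR2427638}, only the degree $\ell(w)$ cohomology contributes to the alternating sum, so the trace equals the signed Lefschetz number, which can be computed by averaging fixed-point counts of the endomorphism composed with right $\mathsf{T}(\mathfrak{f}_F)$-translations and weighted by $\bar\mu_-^{-1}$.

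The crucial analytic step is the identification of the relevant fixed loci using the topological Jordan decomposition $k = su$.  Following Deligne-Lusztig, one first verifies that the only nonzero contributions arise from cosets $h\bH(F^{\mathrm{un}})_{x,0+}$ with $h^{-1}sh\in TH_{x,0+}$ --- these are precisely the $h$ for which ${}^h\dot\mu(s)\ne 0$, and for them $h\mathsf{T}h^{-1}$ lies in $\mathsf{M}_s$.  A Lang-Steinberg argument identifies the fixed locus of the $s$-twisted endomorphism with a disjoint union, indexed by $\mathsf{M}_s(\mathfrak{f}_F)$-cosets in $\mathsf{H}_x^\circ(\mathfrak{f}_F)$, of Deligne-Lusztig varieties for the tori $h\mathsf{T}h^{-1}\subset \mathsf{M}_s$.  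The topologically unipotent part $u$ then acts on each of these smaller varieties, and its Lefschetz number is by definition the Green function $Q^{\mathsf{M}_s}_{h\mathsf{T}h^{-1}}(\bar u)$.  Summing over $h$ and dividing by $|\mathsf{M}_s(\mathfrak{f}_F)|$ produces the desired formula.

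The main obstacle will be the rigorous treatment of the interplay between $T$-conjugation and the topological Jordan decomposition in the compact-mod-center group $TH_{x,0}$, since $s$ need not lie in $H_{x,0}$.  One must show that the endomorphism induced by $s$ on $\mathsf{X}$ still admits a Lang-Steinberg-style fixed-point description, that its fixed locus is governed by the Levi $\mathsf{M}_s\subset \mathsf{H}_x^\circ$ built from the parahoric of $Z_\bH^\circ(s)(F^{\mathrm{un}})$, and that the resulting decomposition into smaller Deligne-Lusztig varieties respects the conjugation action.  Modulo these technical refinements, the argument parallels the proof of Theorem 4.2 in \cite{MR0393266}, with the sign $(-1)^{\ell(w)}$ inherited from the isolation of the single nonvanishing cohomological degree used to define $\rho_T^{\mu_-}$.
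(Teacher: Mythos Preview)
Your plan is correct and tracks the paper's own argument closely: reduce to depth zero via a weak factorization, isolate the $\bar\mu_-$-isotypic piece by averaging over $\mathsf{T}(\mathfrak{f}_F)$, apply the Deligne--Lusztig fixed point formula, decompose the fixed locus into pieces governed by $\mathsf{M}_s$, and recognize the Green functions.

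One simplification the paper makes that you should adopt: rather than wrestling with the general situation you flag as the ``main obstacle'' (that $s$ need not lie in $H_{x,0}$), observe that both sides of the character identity are invariant under $H_{x,0}$-conjugation, and that if $\mathrm{trace}(\rho_T^{\mu_-}(su))\ne 0$ then some $H_{x,0}$-conjugate of $s$ lies in $T$. So one may assume $s\in T$ from the outset. With this reduction your two decompositions of $k$ --- the ad hoc $k=tk'$ with $t\in T$, $k'\in H_{x,0}$, and the topological Jordan decomposition $k=su$ --- coincide, and the operator on $\mathsf{V}$ becomes simply $\mu_-(s)\,\mathrm{Int}(s)\,R^{\bar\mu_-}_{\mathsf{T}}(\bar u)$. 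The fixed-locus analysis is then handled by the auxiliary varieties $\mathsf{W}^{(s,t)}$, $\mathsf{Z}^{(s,t)}$, $\mathsf{Y}^{(s,t)}$ and the explicit decomposition of $\mathsf{X}^{(s,t)}$ in Lemma~\ref{xst}, which makes precise the ``Lang--Steinberg argument'' you allude to and delivers the re-indexing of the double sum over $(t,h)$ as a single sum over $h\in \mathsf{H}_x^\circ(\mathfrak{f}_F)$ with $h^{-1}sh\in TH_{x,0+}$.
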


Since the proof is rather involved, we first sketch the main structure and establish some key details in an auxiliary lemma.

The proof begins with an application of the Deligne-Lusztig fixed point formula to reduce from a formula involving the cohomology of $\mathsf{X}$ to a formula involving the cohomology of the variety 
$$\mathsf{X}^{(s,t)} = \{ h\in \mathsf{X}\ :\ {\rm Int}(s)ht = h\},$$ where $s\in T$ and $t\in \mathsf{T}(\mathfrak{f}_F)$.
The second part of the proof involves showing that $\mathsf{X}^{(s,t)}$ is a finite disjoint union of a collection of open and closed subvarieties isomorphic to a common variety $\mathsf{Y}^{(s,t)}$.  The character is then expressed in terms of the cohomology of $\mathsf{Y}^{(s,t)}$.  The cohomology of $\mathsf{Y}^{(s,t)}$ is further manipulated until its  relation to the Green's functions $Q_{h\mathsf{T}h^{-1}}^{\mathsf{M}_s}$ becomes evident, at which point the desired character formula follows.

We turn now to the relation between $\mathsf{X}^{(s,t)}$ and $\mathsf{Y}^{(s,t)}$.
We need the following definitions:
\begin{eqnarray*}
\mathsf{W}^{(s,t)}&=&\{ h\in \mathsf{H}_x^\circ\ :\   {\rm Int}(s)ht=h\},\\
\mathsf{Z}^{(s,t)}&=&\{ h\in \mathsf{H}_x^\circ\ :\  {\rm Int}(s)h=tht^{-1} \},\\
\mathsf{Y}^{(s,t)}&=&\mathsf{X}\cap (\mathsf{Z}^{(s,t)})^\circ .
\end{eqnarray*}
Note that $\mathsf{W}^{(s,t)}$, $\mathsf{X}^{(s,t)}$, $\mathsf{Y}^{(s,t)}$, and $\mathsf{Z}^{(s,t)}$ are varieties over $\mathfrak{f}_{F^{\rm un}}$ and, in fact, $\mathsf{W}^{(s,t)}$ and $\mathsf{Z}^{(s,t)}$ are defined over $\mathfrak{f}_F$.
In addition, $\mathsf{Z}^{(s,t)}$ is a group and
$$\mathsf{X}^{(s,t)} = \mathsf{X}\cap \mathsf{W}^{(s,t)}.$$

\begin{lemma}\label{xst} The variety $\mathsf{X}^{(s,t)}$ is a finite disjoint union
$$\bigsqcup_{k\in \mathsf{W}^{(s,t)}(\mathfrak{f}_F)/(\mathsf{Z}^{(s,t)})^\circ (\mathfrak{f}_F)}k\mathsf{Y}^{(s,t)}$$ of open and closed subvarieties $k\mathsf{Y}^{(s,t)}$ isomorphic to $\mathsf{Y}^{(s,t)}$.  Moreover, $\mathsf{W}^{(s,t)}(\mathfrak{f}_F)$ is a $\mathsf{Z}^{(s,t)}(\mathfrak{f}_F)$-torsor, that is, if $k_0$ is any fixed element of $\mathsf{W}^{(s,t)}(\mathfrak{f}_F)$ then $$\mathsf{W}^{(s,t)}(\mathfrak{f}_F) = k_0\mathsf{Z}^{(s,t)}(\mathfrak{f}_F).$$
This gives an explicit decomposition of $\mathsf{X}^{(s,t)}$ as a disjoint union of $[\mathsf{Z}^{(s,t)}(\mathfrak{f}_F):(\mathsf{Z}^{(s,t)})^\circ (\mathfrak{f}_F)]$ open and closed subvarieties isomorphic to $\mathsf{Y}^{(s,t)}$.
\end{lemma}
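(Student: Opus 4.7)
The plan is to recognize $\mathsf{W}^{(s,t)}$ as a principal homogeneous space under right translation for the algebraic group $\mathsf{Z}^{(s,t)}$, and then to use Lang's theorem to produce the required $\mathfrak{f}_F$-rational coset representatives. First I would verify the torsor structure directly from the definitions: for $h\in\mathsf{W}^{(s,t)}$ and $z\in\mathsf{Z}^{(s,t)}$, the calculation $s(hz)s^{-1}t = (shs^{-1})(szs^{-1})t = (ht^{-1})(tzt^{-1})t = hz$ shows $hz\in\mathsf{W}^{(s,t)}$, while for $h_1,h_2\in\mathsf{W}^{(s,t)}$ a similar computation yields $s(h_1^{-1}h_2)s^{-1} = t(h_1^{-1}h_2)t^{-1}$, so $h_1^{-1}h_2\in\mathsf{Z}^{(s,t)}$. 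Since $s\in T\subset \bH(F)$ and $t\in\mathsf{T}(\mathfrak{f}_F)$ are both ${\rm Fr}$-invariant, the varieties $\mathsf{W}^{(s,t)}$ and $\mathsf{Z}^{(s,t)}$ are defined over $\mathfrak{f}_F$, and the rationality statement $\mathsf{W}^{(s,t)}(\mathfrak{f}_F) = k_0\,\mathsf{Z}^{(s,t)}(\mathfrak{f}_F)$ is immediate once $k_0\in\mathsf{W}^{(s,t)}(\mathfrak{f}_F)$ is fixed, since any other $k\in\mathsf{W}^{(s,t)}(\mathfrak{f}_F)$ satisfies $k_0^{-1}k\in\mathsf{Z}^{(s,t)}(\mathfrak{f}_F)$. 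The connected components of $\mathsf{W}^{(s,t)}$ are therefore precisely the (finitely many) cosets of $(\mathsf{Z}^{(s,t)})^\circ$.

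Next I would check the easy inclusion $k\mathsf{Y}^{(s,t)}\subseteq\mathsf{X}^{(s,t)}$ for each $k\in\mathsf{W}^{(s,t)}(\mathfrak{f}_F)$. The torsor calculation (applied with $y\in(\mathsf{Z}^{(s,t)})^\circ\subset\mathsf{Z}^{(s,t)}$) gives $ky\in\mathsf{W}^{(s,t)}$, and ${\rm Fr}(k)=k$ gives $(ky)^{-1}{\rm Fr}(ky) = y^{-1}{\rm Fr}(y)\in\mathsf{U}$, so $ky\in\mathsf{X}$. The reverse inclusion, namely that every $h\in\mathsf{X}^{(s,t)}$ has the form $ky$ with $k$ rational and $y\in\mathsf{Y}^{(s,t)}$, is the main content. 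Because $\mathsf{W}^{(s,t)}$ is $\mathfrak{f}_F$-stable, ${\rm Fr}(h)\in\mathsf{W}^{(s,t)}$, so the torsor property gives $h^{-1}{\rm Fr}(h)\in\mathsf{Z}^{(s,t)}$; combined with $h^{-1}{\rm Fr}(h)\in\mathsf{U}$, we find $h^{-1}{\rm Fr}(h)\in\mathsf{U}\cap\mathsf{Z}^{(s,t)}$.

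The main obstacle is then to show $\mathsf{U}\cap\mathsf{Z}^{(s,t)}\subseteq(\mathsf{Z}^{(s,t)})^\circ$; this I would deduce from the standard fact that every unipotent element of a linear algebraic group lies in its identity component, via a one-parameter unipotent subgroup joining the element to the identity. Granting this, $h(\mathsf{Z}^{(s,t)})^\circ$ is ${\rm Fr}$-stable and hence defined over $\mathfrak{f}_F$ as a principal homogeneous space for the connected group $(\mathsf{Z}^{(s,t)})^\circ$; Lang's theorem then furnishes an $\mathfrak{f}_F$-rational point $k\in\mathsf{W}^{(s,t)}(\mathfrak{f}_F)$, and $y:=k^{-1}h$ lies in $(\mathsf{Z}^{(s,t)})^\circ$ by construction, with $y^{-1}{\rm Fr}(y) = h^{-1}{\rm Fr}(h)\in\mathsf{U}$, so $y\in\mathsf{Y}^{(s,t)}$. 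Disjointness of the translates $k\mathsf{Y}^{(s,t)}$ as $k$ ranges over representatives of $\mathsf{W}^{(s,t)}(\mathfrak{f}_F)/(\mathsf{Z}^{(s,t)})^\circ(\mathfrak{f}_F)$ follows from the observation that $k_1 y_1 = k_2 y_2$ forces $k_1^{-1}k_2 = y_1 y_2^{-1}\in\mathsf{Z}^{(s,t)}(\mathfrak{f}_F)\cap(\mathsf{Z}^{(s,t)})^\circ = (\mathsf{Z}^{(s,t)})^\circ(\mathfrak{f}_F)$; each $k\mathsf{Y}^{(s,t)}$ is automatically open and closed in the finite union, and isomorphic to $\mathsf{Y}^{(s,t)}$ via left translation by $k$.
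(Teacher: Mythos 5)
Your argument follows the same overall outline as the paper's (torsor structure of $\mathsf{W}^{(s,t)}$ under $\mathsf{Z}^{(s,t)}$, production of an $\mathfrak{f}_F$-rational component representative via Lang's theorem, then the easy containments and disjointness), and most of your calculations are sound. The paper's proof presents the argument as a list of seven "obvious" steps, of which step (6) is really where Lang's theorem and the unipotent-component fact are hiding, so your more explicit exposition of those ingredients is useful.

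However, the step you flag as "the main obstacle" is also where your justification breaks down. The assertion that \emph{every} unipotent element of a linear algebraic group lies in its identity component is false in characteristic $p>0$: a nontrivial element of the constant group $\Z/p\Z$ (or, more to the point, of the component group of a disconnected centralizer) is unipotent in any linear realization over $\overline{\mathfrak{f}_F}$ while the identity component is trivial. The "one-parameter unipotent subgroup joining the element to the identity" argument is a characteristic-zero argument; in characteristic $p$ a unipotent element has finite $p$-power order, and the Zariski closure of the cyclic group it generates is itself finite, so no such connecting one-parameter subgroup is available in general. The inclusion $\mathsf{U}\cap\mathsf{Z}^{(s,t)}\subseteq(\mathsf{Z}^{(s,t)})^\circ$ is nevertheless correct here, but for a structural reason specific to this situation rather than a general principle: $\mathsf{Z}^{(s,t)}$ is (up to conjugacy) the full centralizer in the connected reductive group $\mathsf{H}_x^\circ$ of a semisimple element (namely $t^{-1}\bar s$, where $\bar s$ is the image of $s$), and for such a centralizer the component group $\mathsf{Z}^{(s,t)}/(\mathsf{Z}^{(s,t)})^\circ$ embeds into (a quotient of) the fundamental group of the derived group of $\mathsf{H}_x^\circ$. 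Under the paper's standing hypothesis that $p$ does not divide $|\pi_1(\bH_{\rm der})|$, this component group has order prime to $p$; since a unipotent element maps to an element of $p$-power order in the component group, its image must be trivial. You should replace the appeal to the false general fact with this argument (or a citation for it, e.g.\ Steinberg's connectedness theorem in the simply connected case together with the standard reduction), since it is also the point at which the blanket hypothesis on $p$ enters the lemma.
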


\begin{proof}
The proof involves of a straightforward series of calculations that consists of verifying the following (ordered) steps:
\begin{itemize}
\item[(1)] $\mathsf{W}^{(s,t)}(\mathfrak{f}_F)\mathsf{Y}^{(s,t)}\subset \mathsf{X}^{(s,t)}$.
\item[(2)] $(\mathsf{Z}^{(s,t)})^\circ(\mathsf{f}_F)$ acts on $\mathsf{W}^{(s,t)}(\mathfrak{f}_F)\times \mathsf{Y}^{(s,t)}$ by
$m\cdot (k,z) = (km^{-1},mz)$.
\item[(3)] The orbits of the latter action are precisely the fibers of the multiplication map
$\mathsf{W}^{(s,t)}(\mathfrak{f}_F)\times \mathsf{Y}^{(s,t)}\to\mathsf{X}^{(s,t)}$.
\item[(4)] If $k_1,k_2\in \mathsf{W}^{(s,t)}(\mathfrak{f}_F)$ then $k_2^{-1}k_1\in \mathsf{Z}^{(s,t)}(\mathfrak{f}_F)$.
\item[(5)] If $k\in \mathsf{W}^{(s,t)}(\mathfrak{f}_F)$ and $z\in \mathsf{Z}^{(s,t)}(\mathfrak{f}_F)$ then $kz\in \mathsf{W}^{(s,t)}(\mathfrak{f}_F)$.
\item[(6)] If $h\in \mathsf{X}^{(s,t)}$ and $k\in \mathsf{W}^{(s,t)}(\mathfrak{f}_F)$ then there exists $z\in \mathsf{Z}^{(s,t)}(\mathfrak{f}_F)$ and $y\in \mathsf{Y}^{(s,t)}$ with $h= kzy$.
\item[(7)] $\mathsf{W}^{(s,t)}(\mathfrak{f}_F)\mathsf{Y}^{(s,t)} = \mathsf{X}^{(s,t)}$.
\end{itemize}
Each step follows directly from the definitions, the previous steps, and an obvious calculation.  We leave the details to the reader.
\end{proof}

\begin{proof}[Proof of Proposition \ref{DLformula}]
We assume $p$ does not divide the order of $\pi_1(\bG_{\rm der})$.  Once we prove our assertions under this assumption,  the general result follows from the discussion in \S\ref{sec:simplified}.

Choose a weak factorization $(\mu_-,\mu_+)$ of $\mu$.  Then 
$${\rm trace} (\rho_T^\mu(k)) = \mu_+(k) \cdot {\rm trace} (\rho_T^{\mu_-}(k))$$
and, similarly, the right hand side of the desired character formula is
$$ \mu_+(k) \cdot \frac{(-1)^{\ell(w)}}{|\mathsf{M}_s(\mathfrak{f}_F)|}\sum_{h\in \mathsf{H}_x^\circ (\mathfrak{f}_F)}{}^h\dot\mu_- (s)\ \hat\mu_- (u)\ Q_{h\mathsf{T}h^{-1}}^{\mathsf{M}_s}(u).$$
Accordingly, we now assume $\mu$ has depth zero or, in other words, $\mu=\mu_-$, since it suffices to establish this case.  In this case, $\hat\mu  (u)$ must, in fact, be trivial.

We  reduce things further by now assuming that $s$ lies in $T$.  This is justified by noting that both sides of the character formula are invariant under conjugation by $H_{x,0}$ and if ${\rm trace}(\rho^\mu_T(su))$ is nonzero then an $H_{x,0}$-conjugate of $s$ must lie  in $T$. 

At this point, our proof follows the approach in the proofs of \cite[Proposition 3.4.14]{KalYu}, \cite[Theorem 7.2.8]{MR1266626}, \cite[Theorem 6.8]{MR551499}, and \cite[Theorem 4.2]{MR0393266}.  Let $\tau$ be the right action of $\mathsf{T}(\mathfrak{f}_F)$ on $\mathsf{V}$ by right translations, that is,
$$\tau(t)v = v\cdot t,$$ as in \S\ref{sec:simplified}.  Define an idempotent operator $\iota$ on $\mathsf{V}$ by
$$\iota = \frac{1}{|\mathsf{T}(\mathfrak{f}_F)|} \sum_{t\in \mathsf{T}(\mathfrak{f}_F)}\bar\mu (t)^{-1}\ \tau(t).$$
Then the representation space of $\rho_T^\mu$ is the space
$$\mathsf{V}_{\bar\mu} = \iota \mathsf{V},$$ and we have a direct sum decomposition
$$\mathsf{V} = \mathsf{V}_{\bar\mu} \oplus (1-\iota)\mathsf{V}.$$
Since $\iota$ annihilates the second summand, we have
$${\rm tr}(\rho^\mu_T(su)) = (-1)^{\ell (w)}\mu(s)\ {\rm tr}({\rm Int}(s) R^{\bar\mu}_{\mathsf{T}}(u) \iota |\mathsf{V}),$$
where we are writing $R^{\bar\mu}_{\mathsf{T}}(u)$ for the Deligne-Lusztig operator $R^{\bar\mu}_{\mathsf{T}}(\bar u)$ associated to the image $\bar u$ of $u$ in $\mathsf{H}_x^\circ (\mathfrak{f}_F)$.
According to the definition of $\iota$,
$${\rm tr}(\rho^\mu_T(su)) = 
\frac{(-1)^{\ell (w)}\mu(s)}{|\mathsf{T}(\mathfrak{f}_F)|}
 \sum_{t\in \mathsf{T}(\mathfrak{f}_F)}\bar\mu (t)^{-1}
\ {\rm tr}(R^{\bar\mu}_{\mathsf{T}}(u) {\rm Int}(s) \tau(t) |\mathsf{V}).
$$
We now  apply the Deligne-Lusztig fixed point formula (Theorem 3.2 \cite{MR0393266}) to obtain
$${\rm tr}(\rho^\mu_T(su)) = 
\frac{(-1)^{\ell (w)}\mu(s)}{|\mathsf{T}(\mathfrak{f}_F)|}
 \sum_{t\in \mathsf{T}(\mathfrak{f}_F)}\bar\mu (t)^{-1}
\ {\rm tr}(R^{\bar\mu}_{\mathsf{T}}(u)  |H^{\ell(w)}_c(\mathsf{X}^{(s,t)},\overline\Q_\ell)).
$$

Property 7.1.6 \cite{MR1266626} (see also
\cite[page 120]{MR0393266}), when applied to the decomposition in Lemma \ref{xst}, yields
\begin{eqnarray*}{\rm tr}(\rho^\mu_T(su)) &=&
\frac{(-1)^{\ell (w)}\mu(s)}{|\mathsf{T}(\mathfrak{f}_F)|}
 \sum_{t\in \mathsf{T}(\mathfrak{f}_F)}\bar\mu (t)^{-1}\\
&&\sum_{h\in \mathsf{W}^{(s,t)}(\mathfrak{f}_F)/(\mathsf{Z}^{(s,t)})^\circ (\mathfrak{f}_F)} {\rm tr}(u  |H^{\ell(w)}_c(h\mathsf{Y}^{(s,t)},\overline\Q_\ell)).
\end{eqnarray*}

According to the definition of $\mathsf{W}^{(s,t)}$, the sums can be re-expressed as
$$ \sum_{t\in \mathsf{T}(\mathfrak{f}_F)} \sum_{h\in \mathsf{W}^{(s,t)}(\mathfrak{f}_F)/(\mathsf{Z}^{(s,t)})^\circ (\mathfrak{f}_F)}
=
\frac{1}{|(\mathsf{Z}^{(s,t)})^\circ (\mathfrak{f}_F)|}
\sum_{
\substack{
t\in \mathsf{T}(\mathfrak{f}_F)\\ h\in \mathsf{H}_x^\circ(\mathfrak{f}_F)\\ {\rm Int}(s)ht=h}
}.$$

It is now evident  that the sum over $t$ consists of at most one term, namely, the term associated to $t= ({\rm Int}(s)h^{-1})h$, assuming that the element $({\rm Int}(s)h^{-1})h$ actually lies in $\mathsf{T}(\mathfrak{f}_F)$.
For this value of $t$, we have
$(\mathsf{Z}^{(s,t)})^\circ = h^{-1}\mathsf{M}_sh$ and hence $\mathsf{Y}^{(s,t)} =h^{-1}\mathsf{M}_s h\cap  \mathsf{X}$.  The value of $t$ is defined precisely when $\hat h^{-1}s\hat h$ lies in $TH_{x,0+}$ for some, hence all, lifts $\hat h$ of $h$ in $H_{x,0}$.  Therefore, $\mu(s)\bar\mu (t)^{-1}$ can be replaced by ${}^h\dot\mu(s)$.
Next, we observe that
\begin{eqnarray*}
{\rm tr}(u  |H^{\ell(w)}_c(h\mathsf{Y}^{(s,t)},\overline\Q_\ell)) &=& {\rm tr}(h^{-1}uh  |H^{\ell(w)}_c(\mathsf{Y}^{(s,t)},\overline\Q_\ell))\\
&=& {\rm tr}(h^{-1}uh  |H^{\ell(w)}_c(h^{-1}\mathsf{M}_s h\cap  \mathsf{X},\overline\Q_\ell))\\
&=&|\mathsf{T}(\mathfrak{f}_F)|\cdot Q_{\mathsf{T}}^{h^{-1}\mathsf{M}_sh}(h^{-1}uh)\\
&=&|\mathsf{T}(\mathfrak{f}_F)|\cdot Q_{h\mathsf{T}h^{-1}}^{\mathsf{M}_s}(u).\end{eqnarray*}

Adjusting our formula according to all of these remarks yields the desired formula
$${\rm trace} (\rho_T^\mu(k)) = \frac{(-1)^{\ell(w)}}{|\mathsf{M}_s(\mathfrak{f}_F)|}\sum_{h\in \mathsf{H}_x^\circ (\mathfrak{f}_F)}{}^h\dot\mu(s)\ Q_{h\mathsf{T}h^{-1}}^{\mathsf{M}_s}(u).$$
\end{proof}

\bibliographystyle{amsalpha}
\bibliography{JLHrefs}

\def\cprime{$'$} \def\cprime{$'$} \def\cprime{$'$} \def\cprime{$'$}
  \def\cprime{$'$} \def\cftil#1{\ifmmode\setbox7\hbox{$\accent"5E#1$}\else
  \setbox7\hbox{\accent"5E#1}\penalty 10000\relax\fi\raise 1\ht7
  \hbox{\lower1.15ex\hbox to 1\wd7{\hss\accent"7E\hss}}\penalty 10000
  \hskip-1\wd7\penalty 10000\box7}
  \def\cftil#1{\ifmmode\setbox7\hbox{$\accent"5E#1$}\else
  \setbox7\hbox{\accent"5E#1}\penalty 10000\relax\fi\raise 1\ht7
  \hbox{\lower1.15ex\hbox to 1\wd7{\hss\accent"7E\hss}}\penalty 10000
  \hskip-1\wd7\penalty 10000\box7}
\providecommand{\bysame}{\leavevmode\hbox to3em{\hrulefill}\thinspace}
\providecommand{\MR}{\relax\ifhmode\unskip\space\fi MR }
\providecommand{\MRhref}[2]{%
  \href{http://www.ams.org/mathscinet-getitem?mr=#1}{#2}
}
\providecommand{\href}[2]{#2}
\begin{thebibliography}{Hak17b}

\bibitem[Car93]{MR1266626}
Roger~W. Carter, \emph{Finite groups of {L}ie type}, Wiley Classics Library,
  John Wiley \& Sons, Ltd., Chichester, 1993, Conjugacy classes and complex
  characters, Reprint of the 1985 original, A Wiley-Interscience Publication.
  \MR{1266626 (94k:20020)}

\bibitem[DL76]{MR0393266}
P.~Deligne and G.~Lusztig, \emph{Representations of reductive groups over
  finite fields}, Ann. of Math. (2) \textbf{103} (1976), no.~1, 103--161.
  \MR{0393266 (52 \#14076)}

\bibitem[DR10]{MR2660683}
Stephen DeBacker and Mark Reeder, \emph{On some generic very cuspidal
  representations}, Compos. Math. \textbf{146} (2010), no.~4, 1029--1055.
  \MR{2660683 (2011j:20114)}

\bibitem[Hak17a]{ANewYu}
Jeffrey Hakim, \emph{Constructing tame supercuspidal representations},
  preprint, {\tt https://arxiv.org/abs/1701.07533}, to appear in {\it
  Represent. Theory} (2017).

\bibitem[Hak17b]{ANewHM}
\bysame, \emph{Distinguished cuspidal representations over $p$-adic and finite
  fields}, preprint, {\tt https://arxiv.org/abs/1703.08861}, (2017).

\bibitem[Hak18]{RegDist}
\bysame, \emph{Distinguished regular supercuspidal representations and
  inductive constructions of representations}, preprint, (2018).

\bibitem[Hal93]{MR1216184}
Thomas~C. Hales, \emph{A simple definition of transfer factors for unramified
  groups}, Representation theory of groups and algebras, Contemp. Math., vol.
  145, Amer. Math. Soc., Providence, RI, 1993, pp.~109--134. \MR{1216184}

\bibitem[He08]{MR2427638}
Xuhua He, \emph{On the affineness of {D}eligne-{L}usztig varieties}, J. Algebra
  \textbf{320} (2008), no.~3, 1207--1219. \MR{2427638}

\bibitem[HM08]{MR2431732}
Jeffrey Hakim and Fiona Murnaghan, \emph{Distinguished tame supercuspidal
  representations}, Int. Math. Res. Pap. IMRP (2008), no.~2, Art. ID rpn005,
  166. \MR{2431732 (2010a:22022)}

\bibitem[How77]{MR0492087}
Roger~E. Howe, \emph{Tamely ramified supercuspidal representations of {${\rm
  Gl}_{n}$}}, Pacific J. Math. \textbf{73} (1977), no.~2, 437--460. \MR{0492087
  (58 \#11241)}

\bibitem[Kal17]{KalYu}
Tasho Kaletha, \emph{Regular supercuspidal representations}, preprint, v2, {\tt
  https://arxiv.org/abs/1602.03144}, (2017).

\bibitem[Kaz84]{MR748509}
David Kazhdan, \emph{On lifting}, Lie group representations, {II} ({C}ollege
  {P}ark, {M}d., 1982/1983), Lecture Notes in Math., vol. 1041, Springer,
  Berlin, 1984, pp.~209--249. \MR{748509}

\bibitem[MP96]{MR1371680}
Allen Moy and Gopal Prasad, \emph{Jacquet functors and unrefined minimal
  {$K$}-types}, Comment. Math. Helv. \textbf{71} (1996), no.~1, 98--121.
  \MR{1371680 (97c:22021)}

\bibitem[Pra01]{MR1871292}
Gopal Prasad, \emph{Galois-fixed points in the {B}ruhat-{T}its building of a
  reductive group}, Bull. Soc. Math. France \textbf{129} (2001), no.~2,
  169--174. \MR{1871292}

\bibitem[Spi08]{MR2408311}
Loren Spice, \emph{Topological {J}ordan decompositions}, J. Algebra
  \textbf{319} (2008), no.~8, 3141--3163. \MR{2408311}

\bibitem[Spr98]{MR1642713}
T.~A. Springer, \emph{Linear algebraic groups}, second ed., Progress in
  Mathematics, vol.~9, Birkh\"auser Boston, Inc., Boston, MA, 1998. \MR{1642713
  (99h:20075)}

\bibitem[Sri79]{MR551499}
Bhama Srinivasan, \emph{Representations of finite {C}hevalley groups}, Lecture
  Notes in Mathematics, vol. 764, Springer-Verlag, Berlin-New York, 1979, A
  survey. \MR{551499}

\bibitem[Wal89]{MR1000107}
J.-L. Waldspurger, \emph{Sur les germes de {S}halika pour les groupes
  lin\'eaires}, Math. Ann. \textbf{284} (1989), no.~2, 199--221. \MR{1000107}

\bibitem[Yu01]{MR1824988}
Jiu-Kang Yu, \emph{Construction of tame supercuspidal representations}, J.
  Amer. Math. Soc. \textbf{14} (2001), no.~3, 579--622 (electronic).
  \MR{1824988 (2002f:22033)}

\end{thebibliography}

\end{document}